\def\csname opt@stmaryrd.sty\endcsname
\newenvironment{salign}{
    \begin{equation}
    \begin{aligned}
}{
    \end{aligned}
    \end{equation}
    \ignorespacesafterend
}
\newcommand{\defeq}{\stackrel{\textnormal{def}}{=}}
\DeclareMathOperator{\Aut}{Aut}
\DeclareMathOperator{\disc}{disc}
\DeclareMathOperator{\Gal}{Gal}
\DeclareMathOperator{\Ht}{Ht}
\DeclareMathOperator{\ord}{ord}
\DeclareMathOperator{\Beta}{\mathbf{B}}
\newtheorem{theorem}{Theorem}[section]
\newtheorem{lemma}[theorem]{Lemma}
\newtheorem{prop}[theorem]{Proposition}
\newtheorem{remark}[theorem]{Remark}
\newtheorem{conj}[theorem]{Conjecture}
\newtheorem*{lemma*}{Lemma}
\numberwithin{equation}{section}
\def\bbR{ {\mathbb R}}
\def\bbF{ {\mathbb F}}
\def\bbQ{ {\mathbb Q}}
\def\bbP{ {\mathbb P}}
\def\bbC{ {\mathbb C}}
\def\bb1{ {\mathbb 1}}
\def\cA{ {\mathcal A} }
\def\cC{ {\mathcal C} }
\def\cM{{\mathcal M}}
\def\cN{ {\mathcal N} }
\def\cO{ {\mathcal O} }
\def\cP{{\mathcal P}}
\def\cR{{\mathcal R}}
\def\cV{{\mathcal V}}
\def\cX{ {\mathcal X} }
\def\sI{ {\mathscr I}}
\def\sJ{ {\mathscr J}}
\def \w{ {\textbf w}}
\newcommand{\Z}{\mathbb{Z}}
\newcommand{\Q}{\mathbb{Q}}
\newcommand{\AL}{\textnormal{AL}}
\newcommand{\GL}{\textnormal{GL}}
\newcommand{\id}{\textnormal{id}}
\newcommand{\SL}{\textnormal{SL}}
\newcommand{\Val}{\textnormal{Val}}
\title{Counting points on some genus zero Shimura curves}
\author{\sc Tyler Genao}
\address{Tyler Genao \\
The Ohio State University\\
USA}
\urladdr{https://tylergenao.com/}
\email{genao.5@osu.edu}
\author{\sc Tristan Phillips}
\address{Tristan Phillips \\
Dartmouth College\\
USA}
\urladdr{https://sites.google.com/site/tristanphillipsmath/home}
\email{tristanphillips72@gmail.com}
\author{\sc Frederick Saia}
\address{Frederick Saia \\
University of Illinois Chicago\\
USA}
\urladdr{https://fsaia.github.io/site/}
\email{fsaia@uic.edu}
\author{\sc Tim Santens}
\address{Tim Santens \\
University of Cambridge\\
UK}
\urladdr{https://sites.google.com/view/timsantens/}
\email{ts996@cam.ac.uk}
\author{\sc John Yin}
\address{John Yin \\
The Ohio State University\\
USA}
\urladdr{https://sites.google.com/view/johng23}
\email{yin.1034@osu.edu}
\date{}
\begin{document}

\subjclass[2020]{Primary 11G10, Secondary 11G50, 11G18, 11G30, 11D45, 14G05, 14D10}

\begin{abstract}
      We count certain abelian surfaces with potential quaternionic multiplication over a number field $K$ by counting points of bounded height on some genus zero Shimura curves.
\end{abstract}

\maketitle

\section{Introduction}

There has been significant recent interest in the problem of counting elliptic curves of bounded height over the rational numbers $\mathbb{Q}$, or more generally over a fixed number field $K$, which have a prescribed torsion group structure. Works following this trend \cite{HS17,PPV20,PS21,CKV22,BN22,MV23,BS24,Phi24+} take the perspective of counting points of bounded height on modular curves parameterizing elliptic curves with a specified structure. 

In this paper, we count points of bounded height on certain compact Shimura curves of genus zero. Let $\cX^D$ denote the Shimura curve defined over $K$ which parameterizes abelian surfaces with fixed quaternionic multiplication (QM) structure by the quaternion algebra over $\bbQ$ of discriminant $D$. This curve has genus zero for $D\in \{6,10,22\}$.
We count points on the corresponding Shimura curves $\cX^6$, $\cX^{10}$, and $\cX^{22}$, and also on their Atkin--Lehner quotients,
which parameterize abelian surfaces with potential quaternionic multiplication (PQM). This is a step towards more generally answering these types of counting problems for abelian varieties of higher dimension, accessed by restricting to a case where extra structure provides a one-dimensional moduli space like for modular curves. We count via the Igusa invariants of corresponding families of genus $2$ curves. For $D\in \{6,10\}$ these families of curves first appeared in work of Hashimoto--Murabayashi \cite{HM95} and have also been studied by Baba--Granath \cite{BG08}. For $D=22$, we use generalizations of these results due to Nualart Riera \cite{NR15} and Lin--Yang \cite{LY20}.
 
 For $D \in \{6,10,22\}$, let $\AL(\cX^D)$ denote the full group of Atkin--Lehner involutions on $\cX^D$, and let $\cA_2$ denote the moduli stack of principally polarized abelian surfaces. For each subgroup $W \leq \AL(\cX^D)$, \cite{HM95}, \cite{BG08}, and \cite{NR15}  describe families $C_{W}^D$ of genus $2$ curves corresponding to the image of $\cX^D/W$ in $\cA_2$ under the map forgetting the QM structure. We discuss these families further in \cref{shim_sec} and \cref{can_ring_sec}. For any number field $K$, the Igusa invariants give a map into a weighted projective stack
\[ 
C^D_{W}(K) \xrightarrow{\sI} \cP(1,2,3,5)(\overline{K}). 
\] 
From this map and a height function on the weighted projective stack we define an \textit{Igusa height} function $\overline{\Ht}$ on $\overline{K}$-isomorphism classes of genus two curves. We also define a height $\Ht$ on $K$-isomorphism classes of genus two curves in such a way that results on Malle's conjecture for counting field extensions can be used to count genus two curves in twist classes.\footnote{See \cref{sec:heights} for the precise definitions of our height functions.}
Let $B \in \mathbb{R}_{> 0}$, and let $\overline{\mathcal{N}}^{D}(W,K,B)$ (resp.\@ $\mathcal{N}^{D}(W,K,B)$) be the counting function for the number of $\overline{K}$-isomorphism classes (resp.\@ $K$-isomorphism classes) of curves $C$ in $C_{W}^{D}(K)$ with $K$ a field of definition and whose Igusa height $\overline{\Ht}(C)$ (resp.\@ whose height $\Ht(C)$) is bounded by $B$. With this setup, we state our main result.

\begin{theorem}\label{main_thm}
We have that
\begin{align*}
\overline{\mathcal{N}}^6(W,K,B)\asymp \mathcal{N}^6(W,K,B) &\ll
\begin{cases}
   B^2/\log(B) &\text{ if } W\in\{\langle w_3\rangle, \AL(\cX^6)\}, \\
    B^2/\log(B)^{1/2} &\text{ if } W\in\{\langle w_2\rangle, \langle w_6\rangle\},\\
    B^2 &\text{ if } W= \{\textnormal{id}\} \text{ and } (-6,2)_K=1, \\
    0 &\text{ if } W=\{\textnormal{id}\} \text{ and } (-6,2)_K=-1,
\end{cases}\\
\overline{\mathcal{N}}^{10}(W,K,B)
\ll&
\begin{cases}
   B/\log(B) &\text{ if } W\in\{\langle w_2\rangle, \AL(\cX^{10})\}, \\
    B/\log(B)^{1/2} &\text{ if } W\in\{\langle w_5\rangle, \langle w_{10}\rangle\},\\
    B &\text{ if } W=\{\textnormal{id}\} \text{ and } (-10,5)_K=1, \\
    0 &\text{ if } W=\{\textnormal{id}\} \text{ and } (-10,5)_K=-1,
\end{cases}\\
\mathcal{N}^{10}(W,K,B)\ll&
\begin{cases}
   B &\text{ if } W\in\{\langle w_2\rangle, \AL(\cX^{10})\}, \\
    B\log(B)^{1/2} &\text{ if } W\in\{\langle w_5\rangle, \langle w_{10}\rangle\},\\
    B\log(B) &\text{ if } W=\{\textnormal{id}\} \text{ and } (-10,5)_K=1, \\
    0 &\text{ if } W=\{\textnormal{id}\} \text{ and } (-10,5)_K=-1,
\end{cases}\\
\overline{\mathcal{N}}^{22}(W,K,B)\ll&
\begin{cases}
   B^{2/5}/\log(B)^{3/2} &\text{ if } W\in\{\langle w_2\rangle,  \AL(\cX^{22})\} \\
   &\text{ or if } W=\langle w_{11}\rangle \text{ and } \Q(\sqrt{-11}) \subseteq K,\\
    B^{2/5}/\log(B) &\text{ if } W = \langle w_{22}\rangle \\
    &\text{ or if } W=\{\id\} \text{ and } \Q(\sqrt{-11}) \subseteq K, 
    \end{cases}\\
    \mathcal{N}^{22}(W,K,B)\asymp& 
\begin{cases}
    B &\text{ if } W\in\{\langle w_2\rangle, \langle w_{22} \rangle,  \AL(\cX^{22})\} \\
   &\text{ or if } W\in\{\{\id\}, \langle w_{11}\rangle\} \text{ and } \Q(\sqrt{-11}) \subseteq K. 
  \end{cases}
\end{align*}
These bounds are sharp if $K=\Q$ or if both $D\in\{6,10\}$ and $W=\{\id\}$.
\end{theorem}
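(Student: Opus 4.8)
The plan is to count $\overline{K}$- and $K$-isomorphism classes of genus two curves in each family $C_W^D$ by transporting the counting problem, via the Igusa invariants $\sI$, to a point-counting problem on the one-dimensional weighted projective stack $\cP(1,2,3,5)$ --- or rather, since $\cX^D/W$ has genus zero, to a point-counting problem on a coarse space $\bbP^1$ (or on an open subscheme of it, once one removes elliptic/CM points and points of bad reduction). Concretely, for each $D\in\{6,10,22\}$ and each $W\le \AL(\cX^D)$ I would use the explicit families from Hashimoto--Murabayashi, Baba--Granath, Nualart Riera, and Lin--Yang discussed in \cref{shim_sec} and \cref{can_ring_sec} to write the Igusa invariants of $C_W^D$ as explicit weighted-homogeneous rational functions of a single parameter $t$ on $\cX^D/W\cong \bbP^1$. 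The Igusa height $\overline{\Ht}$ then pulls back to a height on $\bbP^1$ whose degree in $t$ can be read off from the degrees of those rational functions; this degree is exactly what governs the main term exponent ($2$, $1$, $2/5$ in the three cases), via the standard estimate that the number of points of height $\le B$ on $\bbP^1_{\bbQ}$ of a given anticanonical-type height is $\asymp B^2$, and more generally on $\bbP^1_K$ is $\asymp B^{2[K:\bbQ]}$, suitably normalized.

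Next I would handle the passage between $\overline{K}$-classes and $K$-classes and the appearance of the logarithmic factors. The $\overline{K}$-count $\overline{\mathcal{N}}$ is governed directly by counting $\overline{K}$-points, i.e.\ closed points, of the relevant curve; the $K$-count $\mathcal{N}$ additionally sums over twists within each geometric isomorphism class, and here is where Malle's conjecture (in the cases where it is known, e.g.\ for the abelian or small-degree Galois groups that arise as automorphism groups of genus two curves, or over $K=\bbQ$) enters: the number of twists of bounded height of a fixed curve $C$ is controlled by counting field extensions $L/K$ of bounded discriminant with prescribed Galois group, and the Malle exponents produce precisely the powers of $\log B$ (e.g.\ $\log(B)^{1/2}$, $\log(B)$) and, in the $\mathcal{N}^{10}$ rows, convert a $B/\log(B)$ geometric count into a $B$ or $B\log(B)$ arithmetic count. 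The Hilbert-symbol conditions such as $(-6,2)_K=1$ and the field conditions $\bbQ(\sqrt{-11})\subseteq K$ are local/global solubility constraints: when they fail, the twisted family $C_W^D$ simply has no $K$-rational points at all (the relevant conic or Brauer obstruction is nontrivial), giving the $0$ rows; when they hold, one gets a Zariski-dense set of rational points and the generic count applies. For each $(D,W)$ I would determine which obstruction is present by examining the quaternion algebra and the field of moduli versus field of definition discrepancy coming from the Atkin--Lehner quotient, then combine with the height degree to read off the stated bound.

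Finally, for the sharpness claims --- that the bounds are equalities ($\asymp$) when $K=\bbQ$, or when $D\in\{6,10\}$ and $W=\{\id\}$ --- I would need matching lower bounds. These come from producing enough rational points: over $\bbQ$ one has unconditional lower bounds for counting points of bounded height on $\bbP^1$ and for counting field extensions of bounded discriminant (the Malle lower bound, which is generally known even when the upper bound is open), so both factors in the product are sharp; when $W=\{\id\}$ and $D\in\{6,10\}$ the family $C_{\{\id\}}^D$ is a genuine (fine enough) parameter family over an open subscheme of $\bbP^1_K$ with no twisting ambiguity obscuring the count, so the geometric and arithmetic counts agree and the point count on $\bbP^1_K$ is sharp in both directions. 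I expect the main obstacle to be the bookkeeping in the twisting step: correctly identifying, for each subgroup $W$, the automorphism group of the generic member of $C_W^D$, the exact arithmetic of which twist families are relevant, and matching the Malle exponent for the corresponding Galois group --- together with verifying that the error terms from non-generic points (CM points, points where the genus two curve degenerates, points of bad reduction where $\sI$ is ill-behaved) are of strictly smaller order and hence do not affect the stated main terms. Keeping the height normalizations on $\cP(1,2,3,5)$, on its coarse space, and on the source $\bbP^1$ consistent across all fifteen-or-so cases is the delicate part; the rest is a uniform application of the point-counting and field-counting inputs assembled above.
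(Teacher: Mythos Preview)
Your proposal has the right general shape (pull back Igusa height to a height on $\bbP^1$ via the explicit families, then handle twists via Malle-type counts), but it contains a genuine gap at the most important step: you never correctly identify where the negative powers of $\log B$ in the $\overline{\mathcal{N}}$ counts come from.

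You write that $\overline{\mathcal{N}}$ is ``governed directly by counting $\overline{K}$-points, i.e.\ closed points, of the relevant curve,'' and that the Hilbert-symbol/Brauer conditions are binary solubility constraints that only decide between a $0$ row and the generic count. This is incorrect. By definition, $\overline{\mathcal{N}}^D(W,K,B)$ counts $\overline{K}$-classes of curves \emph{having $K$ as a field of definition}. For a generic genus two curve this requires the Mestre conic over the point in question to have a $K$-point. When $W=\{\id\}$ the paper shows (via the canonical ring relations) that the Mestre obstruction reduces to a single constant Hilbert symbol, which is why those rows are a dichotomy between $0$ and $B^{2/\delta}$. For every nontrivial $W$, however, the Mestre symbol genuinely varies with the parameter on $\bbP^1$ (see \cref{tab:Mestre_obstruction}, \cref{tab:Mestre_obstructions_10}, \cref{tab:Mestre_obstructions_22}), so one is not counting all points of bounded height on $\bbP^1$ but only those lying under a soluble fiber of a nontrivial conic bundle $\pi:\mathcal{B}\to\bbP^1$. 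The paper invokes the results of Loughran--Smeets and Loughran--Matthiesen (packaged as \cref{prop:conic_bundle_count}) to obtain $\ll B^{2/\delta}/\log(B)^{\Delta(\pi)}$, and then computes $\Delta(\pi)$ case by case from the explicit symbols; this is the source of the $\log(B)^{-1}$, $\log(B)^{-1/2}$, $\log(B)^{-3/2}$ factors. Malle-type twist counting cannot produce these: for the generic automorphism group $C_2$, Wright's theorem gives $\asymp B$ with no $\log$ factor at all, so attributing the $\log$ savings to twists is a mismatch of mechanism. The Malle/Wright input only enters in passing from $\overline{\mathcal{N}}$ to $\mathcal{N}$, and there it is combined with $\overline{\mathcal{N}}$ via the Product Lemma (\cref{lem:product}); whether this multiplication erases, preserves, or augments the $\log$ factor depends on whether the height exponent from $\overline{\mathcal{N}}$ exceeds $1$ (the $C_2$-twist exponent) or equals it, which is exactly why $D=6$, $D=10$, and $D=22$ behave differently in the $\mathcal{N}$ rows.
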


\begin{remark}
    We expect the bounds in \cref{main_thm} to be sharp in all cases by a conjecture of Loughran and Smeets \cite[Conjecture 1.6]{LS16}.
\end{remark}

\begin{remark}
    \cref{main_thm} makes no statement about counts if $W \in \{\{\textnormal{id}\},\langle w_{11}\rangle\}$ and $K$ does not contain $\mathbb{Q}(\sqrt{-11})$ in the $D=22$ case. 
    See the proof of \cref{main_thm_22} for details. 
\end{remark}

The conditions for having rational points on the top curves $\cX^{6}$ and $\cX^{10}$ (that is, in the $W = \{\id\}$ cases) seen above are to be expected; by a result of Shimura \cite[Theorem 0]{Sh75}, the curve $\cX^{D}$ has no real points for $D>1$.
Furthermore, a result of Jordan \cite[Theorem 2.1.3]{Jor} states that an abelian surface with QM by quaternion discriminant $D$ and with no non-trivial automorphisms has a model over a field $L$ containing its field of moduli if and only if $L$ splits the quaternion algebra over $\Q$ of discriminant $D$. The non-trivial quotients of these curves, on the other hand, are not pointless over $\Q$; for $D \in \{6,10,22\}$ and for each non-trivial $W \leq \AL(\cX^{D})$, the coarse moduli space $X^D/W$ of $\cX^D/W$ is isomorphic to $\mathbb{P}^1_{\Q}$. 

A novel feature of our study comes from the fact that a PQM abelian surface need not have a model defined over its field of moduli. 
This does not occur for elliptic curves, where the field of moduli of an elliptic curve is always a field of definition, as determined by Deligne--Rapoport \cite[\S VI, Proposition 3.2]{DR73}. The obstruction to a genus $2$ curve corresponding to a PQM abelian surface having a model over its field of moduli is explicitly given by Mestre as equivalent to the splitting of a specified conic. This obstruction leads us to consider the problem of counting the number of fibers of certain conic bundles that contain a rational point. To address this, we adapt general counting results of Loughran--Smeets \cite{LS16} and Loughran--Matthiesen \cite{LM24} (see \cref{prop:conic_bundle_count}). 

Our paper is structured as follows. 
In \cref{shim_sec}, we give a brief overview of basic facts about Shimura curves and their Atkin--Lehner quotients. 
In \cref{sec:heights}, we define our height functions, prove an upper bound for counting $D_4$-octic extensions (\cref{proposition:D4}), and prove a result for finding asymptotics for the product of sets (\cref{lem:product}).
In \cref{conics_sec}, we  prove \cref{prop:conic_bundle_count} which will allow us to account for the Mestre obstruction in our point counts. 
The proof of \cref{main_thm} then comes in three parts: \cref{main_thm_6} and \cref{main_thm_6_K} in \cref{6_sec} for the $D=6$ case; \cref{main_thm_10} and \cref{main_thm_10_K} in \cref{10_sec} for the $D=10$ case; and \cref{main_thm_22} and \cref{main_thm_22_K} in \cref{22_sec} for the $D=22$ case.

\section*{Acknowledgments}

This project came out of the 2023 American Mathematical Society Mathematics Research Community (MRC) on Explicit Computations with Stacks. The authors are extremely grateful for having the opportunity to be a part of this community. Many thanks to all the helpers and organizers of the MRC for putting on a wonderful workshop. Special thanks go to John Voight for his guidance and encouragement on this problem, as well as providing thoughtful feedback on an earlier draft of this paper. Thanks also to Arul Shankar for a helpful conversation about counting $D_4$-octic extensions of number fields.

Support for this project was provided by the National Science Foundation under Grant Number 1916439 for the 2023 summer AMS MRC.
During this project T.P.\@ was supported by National Science Foundation grant DMS-2303011. T.S.\@ was supported during the project by FWO Vlaanderen and the Herschel Smith Fund. J.Y.\@ was supported by National Science Foundation RTG grant DMS-2231565.



\section{Preliminaries on Shimura curves}\label{shim_sec}

\subsection{Shimura curves}
Let $B$ be a quaternion algebra over $\Q$ of discriminant $D$. Suppose that $B$ is indefinite, meaning that $B \otimes_{\mathbb{Q}} \mathbb{R} \cong M_2(\mathbb{R})$. It follows that $D=p_1\cdot p_2\cdots p_{2r}$ is a product of an even number of rational primes, namely those at which $B$ is ramified, and that $B$ is unique up to isomorphism. Let $\mathcal{O}$ be a maximal order in $B$ (unique up to conjugacy in $B^\times$).  

Let $\bar{\ }: B\to B$ denote the \textbf{standard involution} on $B$ (i.e., the unique involution for which $\overline{b}b\in \Q$ for all $b\in B$). A \textbf{principal polarization} on $B$ is an element of $\cO$ satisfying $\mu^2=-D$. 
To $\mu$ we associate an involution
\begin{align*}
    \rho_\mu: B &\to B\\
    b &\mapsto \mu^{-1}\overline{b}\mu,
\end{align*}
and further we associate to $\mu$ the moduli problem of classifying triples $(A,\lambda, \iota)$, where
\begin{itemize}
    \item $A$ is an abelian surface,
    \item $\lambda$ is a principal polarization on $A$, and
    \item $\iota:\cO\hookrightarrow \text{End}(A)$ is a ring homomorphism such that $\rho_\mu$ and the Rosati involution $r_{\lambda}:\text{End}(A)\otimes_{\Z}\Q\to\text{End}(A)\otimes_{\Z}\Q$ corresponding to $\lambda$
     coincide on the image of $\iota$ (i.e., if $a\in \iota(\cO)$, then $\rho_\mu(a)=r_\lambda(a)$). 
\end{itemize}
Given such a triple $(A,\lambda,\iota)$, we refer to $\iota$ as a \textbf{quaternionic multiplication (QM) structure} by $(\cO,\mu)$ on the principally polarized abelian surface $(A,\lambda)$. We refer to the triple $(A,\lambda,\iota)$ as a \textbf{(principally polarized) $(\cO,\mu)$-QM abelian surface}.

Two $(\cO,\mu)$-QM abelian surfaces $(A_1,\lambda_1,\iota_1)$ and $(A_2,\lambda_2,\iota_2)$ are said to be isomorphic if there exists an isomorphism $\varphi:A_1\xrightarrow{\sim} A_2$ for which $\varphi^\ast(\lambda_2)=\lambda_1$ and for each $c\in\cO$ we have $\varphi\circ\iota_1(c)=\iota_2(c)\circ \varphi$.

The above moduli problem can be represented by a stack over $\mathbb{Q}$ (for this we refer to \cite[\S 12]{Boutot79}; see also \cite[Main Theorem 1]{Sh67} for the statement on the canonical model of the coarse moduli space), which we denote by $\cX^D_\mu$.

\subsection{Atkin--Lehner quotients and forgetful maps}

Consider the subgroup of the normalizer of $\cO$ in $B^\times$ defined by 
\[
N_{{B}^\times_{> 0}}(\mathcal{O}) \defeq \{u \in {B}^\times : \text{nrd}(u) > 0 \text{ and } u^{-1} \mathcal{O} u = \mathcal{O} \}.
\]
This group acts on the moduli space $\cX^D_\mu$ as follows:
\begin{align*}
N_{{B}^\times_{> 0}}(\mathcal{O}) \times \cX_\mu^D &\to \cX_\mu^D\\
(u,[(A,\lambda,\iota)])&\mapsto [(A,\lambda_u,\iota_u)],
\end{align*}
where $\lambda_u=\iota(u)^{\ast}(\lambda)/\text{nrd}(u)$ and where
\begin{align*}
    \iota_u: \cO&\hookrightarrow \text{End}(A)\\
    a &\mapsto u^{-1}\iota(a)u.
\end{align*}
Note that if $u\in N_{{B}^\times_{> 0}}(\cO)\cap \bbQ^\times$, then $(A,\lambda,\iota)$ and $(A,\lambda_u,\iota_u)$ are isomorphic. 

We define the \textbf{(positive) Atkin--Lehner group} as
\[ 
\AL(\cX^{D}) \defeq N_{{B}^\times_{> 0}}(\mathcal{O}) / \mathbb{Q}^\times \mathcal{O}^\times \subseteq \text{Aut}(\cX^D_\mu).
\]
 This is a finite abelian $2$-group. Recalling that $D=p_1\cdot p_2\cdots p_{2r}$ is a product of distinct primes, it is known that $\AL(\cX^{D}) \cong \left(\mathbb{Z}/2\mathbb{Z}\right)^{2r}$ is generated by involutions $w_{p_i}$ for $1 \leq i \leq 2r$. 

Let $\cA_2$ denote the moduli stack of principally polarized abelian varieties. Then we have the forgetful map
\begin{align*}
    F:\cX^{D}_\mu&\to \cA_2\\
    [(A,\lambda,\iota)]&\mapsto [(A,\lambda)].
\end{align*}
Let $\cV_\mu^D$ denote the image of $F$. At the level of coarse moduli schemes, Rotger has shown that the image $F(X_\mu^D)$ of $X_\mu^D$ under the forgetful map to $A_2$ is birational to a specific Atkin--Lehner quotient of $X_\mu^D$ \cite[Theorem 4.3]{Rot04a}. 

\subsection{Genus two curves}\label{subsec:genus_two_curves}

Let $\cM_2$ denote the moduli stack of genus two curves. The Torelli map,  $\tau:\cM_2\rightarrow \cA_2$ sending a genus two curve to its principally polarized Jacobian, is an open immersion of stacks \cite{OS80}. Let $\cC^D_{\mu}$ denote the preimage of $\cV^D_{\mu}\cap \tau(\cM_2)$ with respect to the Torelli map. We have the following diagram:
\[
\begin{tikzcd}
& \cA_2 & \cM_2 \arrow[l,"\tau" above] \\
\cX^D_\mu \arrow[r, two heads] \arrow[ru, "F"] & \cV^D_\mu \arrow[u] & \cC^D_\mu. \arrow[l,"\tau" above] \arrow[u] \\
\end{tikzcd}
\]

The \textbf{Igusa invariants} of a genus two curve $C$, introduced by Igusa in \cite{Igusa}, are homogeneous polynomials $I_i(C)$ of degree $i$ in the coefficients of a hyperelliptic model of $C$ for $i\in \{2,4,6,10\}$. These invariants define a map
\begin{align*}
 \cM_2(\overline{K})&\to \cP(2,4,6,10)(\overline{K})\\
    C &\mapsto [I_2(C): I_4(C):I_6(C):I_{10}(C)].
\end{align*}
This restricts to a map 
\begin{align*}
 \cM_2(K)\rightarrow \cP(2,4,6,10)(\overline{K}).
\end{align*}
The Igusa invariants can be normalized by rigidifying along the $C_2$-gerbe of $\cP(2,4,6,10)$ (see, e.g., \cite[Appendix C]{AGV08} for details about rigidification). In particular, let
\[
\sI:\cM_2\to\cP(1,2,3,5)
\]
be the morphism (defined over $\overline{K}$) for which the following diagram commutes: 
\[
\begin{tikzcd}
\cM_2 \arrow[r] \arrow[rd,swap, "\sI"] & \cP(2,4,6,10) \arrow{d}{\text{$C_2$--rigidifiction}} \\
 & \cP(1,2,3,5).
\end{tikzcd}
\]

Mestre has shown that a point $x\in\cP(1,2,3,5)(K)$ is contained in the image of $\sI$ if and only if a certain conic (depending on $x$) has a rational point over $K$ \cite{Mestre}. This obstruction for a set of $K$-rational Igusa invariants to correspond to a genus two curve defined over $K$ is referred to as the \textbf{Mestre obstruction}. The conic that arises is referred to as the \textbf{Mestre conic} corresponding to $x$.

\subsection{The $\mathcal{O}$-PQM locus}

The image $\cV_\mu^D$ of $\cX_\mu^D$ under the forgetful map $F$ is contained in the \textbf{$\mathcal{O}$-PQM locus of $\mathcal{A}_2$}, which we denote by $Q_{\mathcal{O}} \subseteq \mathcal{A}_2$. This is the locus whose points over a number field $K$ consist of geometric isomorphism classes $[(A,\lambda)]$ of $\cO$-PQM abelian surfaces with field of moduli $K$. Naturally, one might ask for the full structure of this locus. One must vary the polarization $\mu$ of the quaternion order $\mathcal{O}$, and at the level of associated coarse moduli schemes this is the entire picture. This is stated in the following proposition of Rotger, which we state in a simplified version sufficient for our needs. We let $X^D_\mu$ denote the coarse moduli scheme over $\mathbb{Q}$ associated to $\cX^D_\mu$, and let $V^D_{\mu}$ denote that associated to $\cV^D_\mu$

\begin{prop}\cite[Proposition 5.3]{Rot04b}
\begin{itemize}
    \item The locus $Q_\mathcal{O}$ is the set $\mathcal{Q}_\mathcal{O}(\mathbb{C})$ of complex points of a reduced, complete subscheme $\mathcal{Q}_\mathcal{O}$ of $A_2$ defined over $\mathbb{Q}$. 
    \item There exist finitely many principal polarizations $\mu_1, \ldots, \mu_k$ of $\mathcal{O}$ such that 
    \[ \mathcal{Q}_\mathcal{O} = \bigcup_{i = 1}^{k} V^D_{\mu_i} \]
    is the decomposition of $\mathcal{Q}_\mathcal{O}$ into irreducible components. 
\end{itemize}
\end{prop}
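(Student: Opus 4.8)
The plan is to realize $\mathcal{Q}_\mathcal{O}$ as the union of finitely many of the forgetful images $V^D_\mu$ and then read off both assertions. First I would record the geometry of an individual $V^D_\mu$. Since $D>1$, the quaternion algebra $B$ is a division algebra, so the compact Shimura curve $\cX^D_\mu$ and its coarse space $X^D_\mu$ are complete; by the theory of canonical models both $X^D_\mu$ and the forgetful map $F$ are defined over $\bbQ$, and $X^D_\mu$ is a smooth geometrically connected curve, hence irreducible. By Rotger's result quoted above \cite[Theorem 4.3]{Rot04a}, the image $V^D_\mu=F(X^D_\mu)$ is birational to an Atkin--Lehner quotient of $X^D_\mu$; in particular it is an irreducible complete curve in $A_2$ defined over $\bbQ$.

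The central step is to identify $Q_\mathcal{O}$, as a subset of the geometric points of $A_2$, with $\bigcup_\mu V^D_\mu$, the union over all principal polarizations $\mu$ of $\mathcal{O}$. The inclusion ``$\supseteq$'' is immediate, since every point of $V^D_\mu$ is by construction the class of an abelian surface admitting a QM structure by $\mathcal{O}$. For ``$\subseteq$'' I would take a geometric point $[(A,\lambda)]\in Q_\mathcal{O}$ together with an embedding $\iota\colon\mathcal{O}\hookrightarrow\operatorname{End}(A)$, and transport the Rosati involution $r_\lambda$ through $\iota$ to obtain a positive involution on $B=\operatorname{End}^0(A)$ stabilizing $\iota(\mathcal{O})$. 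The classification of positive involutions on an indefinite rational quaternion algebra shows this involution equals $\rho_\nu\colon b\mapsto\nu^{-1}\overline b\nu$ for some pure quaternion $\nu$ with $\nu^2<0$, and stability of $\mathcal{O}$ forces $\nu\in N_{{B}^\times_{>0}}(\mathcal{O})$. The delicate point is the normalization of $\nu$: rescaling by a rational and invoking the structure theory of polarizations on QM abelian surfaces --- in particular that a principal polarization compatible with QM by the \emph{maximal} order $\mathcal{O}$ has type $D$ --- one pins down $\operatorname{nrd}(\nu)=D$, so that $\nu$ may be replaced by some $\mu\in\mathcal{O}$ with $\mu^2=-D$; then $(A,\lambda,\iota)$ is a point of $\cX^D_\mu$ lying over $[(A,\lambda)]$.

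To conclude, I would invoke finiteness. Conjugate principal polarizations reparametrize the same moduli problem and so give the same curve $V^D_\mu$, and the principal polarizations of $\mathcal{O}$ modulo $\mathcal{O}^\times$-conjugacy form a finite set: these $\mu$ are precisely the trace-zero elements of $\mathcal{O}$ of reduced norm $D$, equivalently the embeddings of $\bbZ[\sqrt{-D}\,]$ into $\mathcal{O}$, and $\mathcal{O}^\times$ acts on them with finitely many orbits by reduction theory for the indefinite ternary form $\operatorname{nrd}$. Choosing representatives $\mu_1,\dots,\mu_k$ and setting $\mathcal{Q}_\mathcal{O}=\bigcup_{i=1}^k V^D_{\mu_i}$ with its reduced induced structure yields a reduced subscheme of $A_2$ defined over $\bbQ$, complete because it is the image of the proper $\bbQ$-scheme $\bigsqcup_i X^D_{\mu_i}$, and with $\mathcal{Q}_\mathcal{O}(\bbC)=Q_\mathcal{O}$ by the previous paragraph; this is the first bullet. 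For the second, each $V^D_{\mu_i}$ is an irreducible curve and $\mathcal{Q}_\mathcal{O}$ is one-dimensional, so every irreducible component of $\mathcal{Q}_\mathcal{O}$ coincides with some $V^D_{\mu_i}$; discarding redundant indices leaves the $V^D_{\mu_i}$ as exactly the irreducible components.

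I expect the genuine obstacle to be the normalization in the second paragraph --- showing that a principal polarization compatible with QM by the maximal order $\mathcal{O}$ always forces $\mu^2=-D$ --- together with the finiteness of $\mathcal{O}^\times$-conjugacy classes of such $\mu$. Both are classical (the theory of the Rosati involution and of polarization modules for QM abelian surfaces, and finiteness of class numbers), so the real work lies in the polarization bookkeeping rather than in any new geometric input.
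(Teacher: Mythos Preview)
The paper does not prove this proposition; it is quoted verbatim from Rotger \cite[Proposition~5.3]{Rot04b} and used as background input. So there is no ``paper's own proof'' to compare against.

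Your outline is a plausible reconstruction of how such a result is proved, and the overall architecture --- show each $V^D_\mu$ is an irreducible complete $\bbQ$-curve, identify $Q_\mathcal{O}$ set-theoretically with $\bigcup_\mu V^D_\mu$, and reduce to finitely many $\mu$ up to $\mathcal{O}^\times$-conjugacy --- is the natural one. You have also correctly flagged the only genuinely non-formal step: given $(A,\lambda)$ with $\mathcal{O}\hookrightarrow\operatorname{End}(A)$, the Rosati involution restricts to a positive involution of $B$ preserving $\mathcal{O}$, and one must pin this down as $\rho_\mu$ for some $\mu\in\mathcal{O}$ with $\mu^2=-D$. This is exactly the ``polarization bookkeeping'' carried out in Rotger's papers (see also \cite[\S6]{Rot04b} and the references therein), so your assessment that the real content lies there is accurate. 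If you want to make the sketch into a proof, that normalization and the finiteness of $\mathcal{O}^\times$-orbits of such $\mu$ are the two claims that need a precise reference or a self-contained argument; everything else is routine.
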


\section{Heights}\label{sec:heights}

Let $\Val(K)$ denote the set of places of the number field $K$, and let $\Val_0(K)$ (resp.\@ $\Val_\infty(K)$) denote the finite (resp.\@ infinite) places of $K$.
For each finite place $v\in \Val_0(K)$,  let $\pi_v$ be a uniformizer for the completion $K_v$ of $K$ at $v$.
Let $\w$ denote the $(n+1)$-tuple  $(w_0,w_1,\dots,w_n)$ of positive integers. 
For $x=(x_0,\dots,x_{n})\in K^{n+1}- \{(0,0,\ldots,0)\}$ define
\[
 |x|_{{\w},v}\defeq\begin{cases}
 \max_i\left\{|\pi_v|_v^{\lfloor v(x_i)/w_i\rfloor}\right\} & \text{ if } v\in \Val_0(K),\\
 \max_i\left\{|x_i|_v^{1/w_i}\right\} & \text{ if } v\in \Val_\infty(K).
 \end{cases}
 \]
 We define the \textbf{height} of a point $x=[x_0:\cdots:x_n]\in\cP(\w)(K)$ to be
\[
\Ht_\w(x)\defeq\prod_{v\in \Val(K)} |(x_0,\dots,x_n)|_{\w,v}.
\]
 Define the \textbf{Igusa height} on $\cM_2(K)$ as
 \begin{align*}
    \overline{\Ht}: \cM_2(K) &\to \bbR_{>0}\\
    C &\mapsto \min\left\{\Ht_{(1,2,3,5)}(\sI(C')) : 
    C'\cong C \text{ over } \overline{K}\right\}.
 \end{align*}
 Note that for curves $C\in \cM_2(K)$ for which $\sI(C)$ is (geometrically) one of the three stacky points of $\cP(1,2,3,5)$ we have $\overline{\Ht}(C)=1$.
  For all other curves
  \[
  \overline{\Ht}(C)=\Ht_{(1,2,3,5)}(\sI(C)).
  \]
  The Igusa height only depends on the $\overline{K}$-isomorphism class of the curve. 

  We now define a height that can be used to count $K$-isomorphism classes of genus $2$ curves. For a genus two curve $C$ over $K$ let $\mathfrak{N}(C)$ denote the conductor of the Jacobian of $C$ over $K$.
  For each twist class of genus two curves choose a genus two curve $C_{\min}$ over $K$ for which 
  \[
  N_{K/\Q}(\mathfrak{N}(C_{\min})) =\min\left\{N_{K/\Q}(\mathfrak{N}(C')) : C'\cong C \text{ over } \overline{K}\right\}.
  \]

  Let $K_C$ denote a finite extension (of minimal degree) of $K$ for which $C\cong C_{\min}$ over $K_C$. 
  
Define the \textbf{height} of a genus two curve $C\in \cM_2(K)$ as
\begin{equation}\label{eq:height}
\Ht(C)\defeq \overline{\Ht}(C) \cdot |N_{K/\Q}(\disc_K(K_C))|.
\end{equation}

\begin{remark}
    This height will depend upon the choice of twist minimal curves $C_{\min}$. It would be interesting to describe a canonical way of choosing these curves. However, this choice will not affect the asymptotic results of this paper. 
\end{remark}

\subsection{Counting genus two curves in twist classes}

Let $C$ be a genus two curve over the number field $K$. The possible geometric automorphism groups $\Aut_{\overline{K}}(C)$ of $C$ are the cyclic group of order two $C_2$, the Klein four group $C_2\times C_2$, the dihedral group of order eight $D_4$, the cyclic group of order ten $C_{10}$, the dihedral group of order twelve $D_6$, the special linear group $\SL_2(\bbF_3)$, and the general linear group $\GL_2(\bbF_3)$. However, the only geometric automorphism groups we will encounter in this paper are $C_2$, $C_2\times C_2$, and $D_4$. 

We have defined our height in (\ref{eq:height}) in such a way that counting genus two curves in the twist class of a genus two curve $C$ is equivalent to counting field extensions of $K$ with Galois group $\Aut_{\overline{K}}(C)$ by discriminant. Malle's conjecture (and its refinements) predict asymptotics for counting field extensions by discriminant. For example, the following Proposition is an immediate consequence of Wright's work on enumerating abelian extensions of number fields \cite{Wri89}.

\begin{prop}\label{proposition:Wright}
    Let $C$ be a genus two curve over $K$. Then
    \begin{align*}
    \#&\{C'\in \cM_2(K) : \Ht(C')\leq B \text{ and } C'\cong C \text{ over } \overline{K}\}\\
    &\qquad \asymp
    \begin{cases}
        B & \text{ if } \Aut_{\overline{K}}(C)\cong C_2,\\
        B^{1/2}\log(B)^2 & \text{ if } \Aut_{\overline{K}}(C)\cong C_2\times C_2.
    \end{cases}
    \end{align*}
\end{prop}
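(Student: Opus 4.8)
The plan is to reduce the count of genus two curves in the twist class of $C$ to a count of field extensions $L/K$ with $\Gal(L/K) \cong \Aut_{\overline{K}}(C)$, weighted by the norm of the discriminant, and then invoke Wright's asymptotics for abelian extensions. First I would recall that, for a genus two curve $C$ over $K$ with geometric automorphism group $G = \Aut_{\overline{K}}(C)$, the twists of $C$ over $K$ are classified by $H^1(G_K, G)$ (using that the relevant automorphism groups $C_2$ and $C_2 \times C_2$ are abelian, so the Galois action is trivial and this is just $\Hom(G_K, G)$ up to the usual equivalence). Each nontrivial twist corresponds to a surjection $G_K \twoheadrightarrow H$ onto a subgroup $H \leq G$, i.e. to a Galois extension $L/K$ with $\Gal(L/K) \cong H$, and $C$ becomes isomorphic to $C_{\min}$ over $L$; thus the field $K_C$ in the definition of $\Ht$ can be taken to be this $L$ (or a subfield of it of the same degree when $G = C_2 \times C_2$, but the discriminant comparison still goes through up to the implied constants).

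Next I would unwind the height: by \eqref{eq:height}, $\Ht(C') = \overline{\Ht}(C') \cdot |N_{K/\Q}(\disc_K(K_{C'}))|$, and since $C' \cong C$ over $\overline{K}$ we have $\overline{\Ht}(C') = \overline{\Ht}(C)$, a fixed constant. Hence $\Ht(C') \leq B$ is equivalent, up to a constant factor, to $|N_{K/\Q}(\disc_K(L))| \leq B$ where $L = K_{C'}$. So the count becomes
\[
\#\{C' : \Ht(C')\leq B,\ C'\cong C \text{ over }\overline{K}\} \asymp \#\{L/K : \Gal(L/K)\hookrightarrow G,\ |N_{K/\Q}(\disc_K(L))|\leq cB\},
\]
plus a bounded contribution from the trivial twist. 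When $G \cong C_2$ this is the count of quadratic extensions of $K$ ordered by norm of discriminant, which is $\asymp B$ by the classical (Wright / Datskovsky--Wright) result. When $G \cong C_2 \times C_2$ one must count all subextensions: the trivial one, the three families of quadratic extensions, and the $V_4$-extensions; by Wright's theorem the $V_4$-count dominates and gives $\asymp B^{1/2}\log(B)^2$, which also absorbs the $\asymp B^{1/2}$ contribution from quadratic subfields. I would cite \cite{Wri89} for the precise power of $B$ and power of $\log$ in each case.

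The main obstacle I anticipate is bookkeeping rather than depth: one must check that replacing the genuine minimal field of definition $K_{C'}$ (a field of \emph{minimal degree} over which $C' \cong C_{\min}$) by the splitting field $L$ of the corresponding cocycle changes $|N_{K/\Q}(\disc_K(\cdot))|$ by at most a bounded factor, and conversely; for $C_2$ these coincide, and for $C_2 \times C_2$ the minimal field could be one of the quadratic subfields of a biquadratic $L$, so one needs the elementary fact that $\disc_K$ of a subfield divides a fixed power of $\disc_K(L)$ and vice versa up to the conductor-discriminant relation — this only affects constants. A second minor point is matching the conductor-based normalization $C_{\min}$ (chosen to minimize $N_{K/\Q}(\mathfrak{N})$) with the twist-class structure; since $\mathfrak{N}(C')$ depends only on the twist, $C_{\min}$ is a well-defined representative and the argument above is unaffected. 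Finally, one should note that the stated $\asymp$ (both upper and lower bounds) follows because Wright's results give genuine asymptotics, not merely upper bounds, for abelian extensions ordered by discriminant.
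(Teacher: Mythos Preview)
Your proposal is correct and follows the paper's approach exactly: the paper gives no detailed proof but simply states that the proposition is an immediate consequence of Wright's enumeration of abelian extensions \cite{Wri89}, and your argument spells out the reduction (twists of $C_{\min}$ $\leftrightarrow$ extensions $L/K$ with $\Gal(L/K)\hookrightarrow G$, together with $\overline{\Ht}$ being constant on the twist class) that makes this citation do the work. One small correction: the implication ``$G$ abelian $\Rightarrow$ the $G_K$-action on $\Aut_{\overline K}(C_{\min})$ is trivial'' is not valid reasoning in general---for $C_2$ it holds because the hyperelliptic involution is always $K$-rational, and for $C_2\times C_2$ a nontrivial action would only perturb the count by bounded factors, so the $\asymp$ conclusion is unaffected.
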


Malle's conjecture, in the case of $D_4$-octic extensions, leads to the following prediction.

\begin{conj}\label{conjecture:Malle_D4}
    Let $C$ be a genus two curve over $K$ with $\Aut_{\overline{K}}(C)\cong D_4$. Then
    \begin{align*}
    \#&\{C'\in \cM_2(K) : \Ht(C')\leq B \text{ and } C'\cong C \text{ over } \overline{K}\}\sim B^{1/4}\log(B)^2.
    \end{align*}
\end{conj}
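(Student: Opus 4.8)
The plan is to reduce the statement to a known (or conjectural) count of $D_4$-octic extensions of $K$ and then invoke the relevant instance of Malle's conjecture. By the discussion preceding the statement, the height $\Ht$ is set up so that for a fixed genus two curve $C$ with $\Aut_{\overline{K}}(C)\cong D_4$, the twists $C'$ of $C$ defined over $K$ are classified by a twisting cocycle in $H^1(K,\Aut_{\overline{K}}(C))=H^1(K,D_4)$, and the contribution $|N_{K/\Q}(\disc_K(K_{C'}))|$ to $\Ht(C')$ is (up to bounded factors coming from the fixed $\overline{\Ht}(C)=\overline{\Ht}(C')$, which is constant on the geometric isomorphism class) the absolute norm of the discriminant of the splitting field $K_{C'}$ of the cocycle. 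First I would make this bijection precise: a twist corresponds to a continuous homomorphism $\rho\colon \Gal(\overline{K}/K)\to D_4$, and the ``size'' of the twist is governed by the discriminant of the fixed field $K_{C'}=\overline{K}^{\ker\rho}$, which for a surjective $\rho$ is a $D_4$-octic field. The subtlety is that non-surjective $\rho$ (those landing in a proper subgroup of $D_4$) contribute twists whose splitting fields have degree dividing $8$ but less than $8$; I would check that these form a lower-order term, since an extension of degree $d\mid 8$, $d<8$, with bounded discriminant is counted by a power strictly smaller than $B^{1/4}$ times logarithmic factors (indeed degree $4$ gives $B^{1/2}$... wait — one must be careful: smaller degree does \emph{not} automatically mean smaller count, so this comparison needs the precise discriminant-exponent bookkeeping, see below).

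The key input is the version of Malle's conjecture for $G=D_4$ acting on $8$ points via its regular representation (equivalently, counting octic fields $L/K$ with $\Gal(\widetilde{L}/K)\cong D_4$ by $|N_{K/\Q}(\disc_K(L))|$). Malle's prediction here is $\#\{L\} \sim c\, B^{1/a(D_4)}\log(B)^{b(K,D_4)}$ with $a(D_4)=\min_{g\neq e} \operatorname{ind}(g)$, where $\operatorname{ind}(g) = 8 - (\text{number of orbits of }g\text{ on the 8 points})$. An element of order $2$ in $D_4$ acting regularly on $8$ points has $4$ orbits, giving index $4$; the element of order $4$ has $2$ orbits, index $6$; the central involution that is a product of all transpositions has... one checks the minimum over all nontrivial $g$ is $a(D_4)=4$, hence the exponent $B^{1/4}$. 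The logarithmic exponent $b(K,D_4)$ counts (roughly) the number of $\operatorname{Gal}(\overline{K}/K)$-orbits on the minimal-index conjugacy classes; for $D_4$ one gets $b=2$, matching $\log(B)^2$ in the statement. So the clean statement is: assuming Malle's conjecture for $D_4$-octic fields over $K$ with the standard refinement of the power of $\log$, the count of twists is $\asymp B^{1/4}\log(B)^2$, and with the precise constant predicted by Malle it is $\sim c B^{1/4}\log(B)^2$.

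The steps, in order, would be: (i) set up the twist--cocycle dictionary and identify $\Ht(C')$ with (a bounded multiple of) $|N_{K/\Q}(\disc_K(K_{C'}))|$; (ii) separate the sum over cocycles by the image subgroup $H\leq D_4$, isolating the surjective ones; (iii) for the non-surjective strata, bound their contribution: for $H$ abelian use Wright's theorem (as in Proposition~\ref{proposition:Wright}), checking each gives exponent $<1/4$ or, when the exponent is not obviously smaller, that the relevant discriminant is forced to be large enough by the structure of the embedding $H\hookrightarrow D_4$ into the regular representation (this is where the regular representation, not a smaller faithful one, matters and where I expect the real bookkeeping to lie); (iv) for the surjective stratum, quote Malle's conjecture for $D_4$-octics over $K$, computing $a(D_4)=4$ and $b(K,D_4)=2$ explicitly. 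The main obstacle is step (iii) together with the honest verification that the \emph{conductor}-based quantity $|N_{K/\Q}(\disc_K(K_C))|$ appearing in the definition~\eqref{eq:height} of $\Ht$ is comparable to the quantity counted by Malle's conjecture (the field discriminant of the $D_4$-octic splitting field): one must check that the minimal field of definition $K_C$ is exactly the fixed field of the kernel of the twisting cocycle and that replacing conductor-of-Jacobian normalizations by discriminant-of-field normalizations only changes things by $O_K(1)$-bounded or at worst $B^{o(1)}$ factors, which do not affect the asserted asymptotic. Once that comparison is in hand, the result is purely a restatement of the $D_4$ case of Malle's conjecture, which is why it is recorded as a conjecture rather than a theorem.
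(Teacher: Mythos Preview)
This statement is labeled a \emph{conjecture} in the paper, and the paper provides no proof: the sole justification is the sentence ``Malle's conjecture, in the case of $D_4$-octic extensions, leads to the following prediction.'' Your proposal is therefore not competing with any argument in the paper; you are fleshing out that one-line heuristic. Your reduction to Malle for the regular $D_4$-representation and your computation of $a(D_4)=4$ and $b(K,D_4)=2$ are correct and make explicit what the paper leaves implicit, and your closing remark that this is why the statement is recorded as a conjecture is exactly the paper's position.

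There is, however, a genuine gap in your step~(iii), and it is more serious than the ``bookkeeping'' you suggest. A cocycle with image the central $C_2\subset D_4$ generated by the hyperelliptic involution gives an ordinary quadratic twist $C'$ of $C_{\min}$, for which $K_{C'}$ is a quadratic extension of $K$; by Wright such twists are counted $\asymp B$, not $o(B^{1/4}\log(B)^2)$. Likewise the $C_2\times C_2$ and $C_4$ strata contribute $\asymp B^{1/2}\log(B)^2$ and $\asymp B^{1/2}$ respectively. With $K_C$ defined as in~(\ref{eq:height}) (a minimal-degree field over which $C\cong C_{\min}$, i.e.\ the splitting field of the cocycle), the non-surjective strata \emph{dominate} the surjective one, so the literal asymptotic $\sim B^{1/4}\log(B)^2$ cannot hold for the full twist count. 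Your hedge that ``the relevant discriminant is forced to be large enough by the structure of the embedding $H\hookrightarrow D_4$ into the regular representation'' would rescue this only if the height used the discriminant of the associated degree-$8$ \'etale $K$-algebra rather than that of the honest field $K_C$; as written the definition does not do this. So either the conjecture is implicitly restricted to twists with surjective cocycle, or the height needs reinterpretation --- the paper does not disambiguate, and neither does your proposal. You correctly sensed a difficulty here but did not pin it down.
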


We prove a weak bound for counting $D_4$-octics.

\begin{prop}\label{proposition:D4}
    Let $C$ be a genus two curve over $K$ with $\Aut_{\overline{K}}(C)\cong D_4$. Then
    \begin{align*}
    \#&\{C'\in \cM_2(K) : \Ht(C')\leq B \text{ and } C'\cong C \text{ over } \overline{K}\}= O(B^{1/2}).
    \end{align*}
\end{prop}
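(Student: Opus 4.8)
The plan is to reduce the count of genus two curves in a fixed $D_4$-twist class to a count of $D_4$-octic extensions of $K$ by discriminant, and then bound the latter by a simple argument passing through the quartic resolvent field. By the discussion preceding \cref{proposition:D4} (and the definition of $\Ht$ in \eqref{eq:height}), counting $C' \in \cM_2(K)$ with $C' \cong C$ over $\overline{K}$ and $\Ht(C') \le B$ amounts, up to the bounded factor $\overline{\Ht}(C)$, to counting field extensions $L/K$ with $\Gal(\widetilde{L}/K) \cong D_4$ (where $\widetilde{L}$ is the Galois closure and $L$ the degree-$8$ field cut out by the twisting cocycle, or the appropriate index-$2$ subfield depending on the embedding $D_4 \hookrightarrow D_4$) and $|N_{K/\Q}(\disc_K(L))| \le B$. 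The key point is that \emph{any} reasonable bound of shape $O(B^{1/2})$ for such octic fields suffices, and we do not need the conjectural main term $B^{1/4}\log(B)^2$.

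First I would set up the correspondence precisely: a twist of $C$ is classified by an element of $H^1(G_K, \Aut_{\overline{K}}(C)) = H^1(G_K, D_4)$, i.e.\ by a (not necessarily surjective) homomorphism $G_K \to D_4$ up to conjugacy, equivalently by an étale $K$-algebra of degree dividing $8$ with Galois action factoring through $D_4$. I would record that the height $\Ht(C')$ equals $\overline{\Ht}(C) \cdot |N_{K/\Q}(\disc_K(K_{C'}))|$ where $K_{C'}$ is the minimal field over which $C'$ becomes isomorphic to the chosen twist-minimal curve, so that the $\disc$ term is precisely the discriminant of the fixed field of a point stabilizer inside the $D_4$-extension. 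Then counting twists of bounded height is counting these fields of bounded discriminant.

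Next, to bound the number of degree-$8$ $D_4$-fields $L/K$ with $|N_{K/\Q}(\disc_K(L))| \le B$, I would use the tower structure: $D_4$ has a normal subgroup of order $2$ whose quotient is the Klein four group $C_2 \times C_2$, but more usefully $D_4$ acting on $8$ points has a block system with $4$ blocks of size $2$ giving an intermediate quartic field $M/K$ with $\Gal(\widetilde{M}/K) \cong C_2 \times C_2$ or $\Z/4$ (the quotient $D_4/Z(D_4)$). By the conductor-discriminant formula (or just multiplicativity of discriminants in towers, $\disc_K(L) = \disc_K(M)^2 \cdot N_{M/K}(\mathfrak{d}_{L/M})$), one has $|N_{K/\Q}(\disc_K(M))|^2 \le |N_{K/\Q}(\disc_K(L))| \le B$, so $M$ ranges over quartic fields of bounded discriminant $\le B^{1/2}$; there are $O(B^{1/2})$ such $M$ by the trivial bound on quartic fields (or even the much better $O(B^{1/4}\log B)$ for the relevant Galois type, but $O(B^{1/2})$ suffices). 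Then I would argue that for each fixed $M$, the number of $L/M$ with $L/K$ of $D_4$-type and $|N_{K/\Q}(\disc_K(L))| \le B$ is $O(1)$, or at worst grows slowly: $L/M$ is a quadratic extension with conductor bounded in terms of $B / |N_{K/\Q}(\disc_K(M))|^2$, and quadratic extensions of $M$ of bounded norm-conductor number $O(B^{\epsilon})$ — but since we only need $O(B^{1/2})$ total, I can afford to be wasteful, e.g.\ summing $\sum_{M} O(B^{1/2}/|N_{K/\Q}(\disc_K(M))|)$ over quartic $M$, which converges against the count of quartic fields weighted by discriminant and gives $O(B^{1/2})$ after a summation-by-parts using the trivial quartic-field bound.

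The main obstacle is bookkeeping: making sure the intermediate quartic $M$ really does have discriminant controlled by $\disc_K(L)^{1/2}$ with the correct exponent (this is where one must be careful whether the relevant $D_4 \hookrightarrow S_8$ is the one with four blocks of size two and whether $M$ is the fixed field of the right subgroup), and handling the contribution of non-surjective cocycles (degenerate twists where the image in $D_4$ is a proper subgroup, giving fields of smaller degree) which only contribute $O(B^{1/2})$ or better by \cref{proposition:Wright} and an even easier trivial bound. Once the tower estimate is in place the conclusion $O(B^{1/2})$ follows by summing a convergent (or $\log$-power) series against the count of the intermediate fields, and no delicate analytic input beyond the trivial bound on the number of number fields of bounded discriminant and degree is needed.
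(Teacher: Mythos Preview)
Your high-level strategy coincides with the paper's: reduce to counting $D_4$-octic extensions of $K$ by discriminant, pass to an intermediate quartic $M$ via the tower formula $\disc_K(L_8)=\disc_K(M)^{2}\cdot N_{M/K}(\disc_M(L_8))$ so that $|N_{K/\Q}(\disc_K(M))|\le B^{1/2}$, and then count quartics. The paper finishes by taking $M=L_4$ to be a \emph{non-Galois} $D_4$-quartic (the fixed field of a non-central order-$2$ subgroup). For that choice the Galois closure of $L_4$ is exactly $L_8$, so the assignment $L_8\mapsto L_4$ has bounded fibres and the count of octics is at most the count of $D_4$-quartics of discriminant $\le B^{1/2}$, which is $\asymp B^{1/2}$ by the known case of Malle's conjecture for $D_4$-quartics (Cohen--Diaz~y~Diaz--Olivier, Kl\"uners). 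No summation is needed.

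Where your proposal wobbles is in the identification of $M$. You describe $M$ as the quartic with $\Gal(\widetilde{M}/K)\cong C_2\times C_2$ ``or $\Z/4$''; that quotient is always $C_2\times C_2$, and this $M$ is the fixed field of the \emph{centre}. For that choice the map $L_8\mapsto M$ is very far from finite-to-one: the $D_4$-octics over a fixed $V_4$-quartic $M$ form (when nonempty) a torsor under quadratic characters of $K$, and the crude bound on quadratic $L/M$ with $\disc_K(L)\le B$ is $O\!\left(B/|N_{K/\Q}(\disc_K(M))|^{2}\right)$, not the $O\!\left(B^{1/2}/|N_{K/\Q}(\disc_K(M))|\right)$ you wrote. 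Summing the former over $V_4$-quartics gives only $O(B)$, so the summation route as stated does not close. The clean fix is exactly the paper's: take the $D_4$-quartic instead, so that ``for each $M$ there are $O(1)$ such $L$'' is literally true and the bound $O(B^{1/2})$ drops out immediately from the quartic count.
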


\begin{proof}
    We want to count $D_4$-octic extensions over $K$ by discriminant. 
    If $L_8/K$ is a $D_4$-octic extension then there exists a $D_4$-quartic extension $L_4$ whose Galois closure $\tilde{L}_4$ is isomorphic to $L_8$. Conversely, the Galois closure of any $D_4$-quartic isomorphic to $L_4$ will be isomorphic to $L_8$. The relationship between the discriminants $\disc_{K}(L_8)$ and $\disc_{K}(L_4)$ (see, e.g., \cite[Chapter III Corollary 2.10]{Neu}) gives the following inequality,
    \begin{equation}
        |N_{K/\Q}(\disc_K(L_8))|\geq |N_{K/\Q}(\disc_K(L_4))|^2.
    \end{equation}
    Therefore, counting $D_4$-quartics whose discriminant is bounded by $B^{1/2}$ will give an upper bound for counting $D_4$-octics with discriminant bounded by $B$.

     Malle's conjecture is known for $D_4$-quartic extensions over number fields; this was stated in \cite{CDyDO02} and can be viewed as a special case of the main result of \cite{Klu12}. In particular, 
     \begin{equation*}
         \#\{L_4/K : |N_{K/\Q}(\disc_K(L_4))|\leq B^{1/2},\ [L_4:K]=4, \text{ and } \Gal(\tilde{L}_4/K)\cong D_4\}\asymp B^{1/2}.
     \end{equation*}
\end{proof}

\subsection{A product lemma}
We now prove a lemma for counting points on products of sets. For complex numbers $z_1,z_2\in \bbC$ with positive real part the \textbf{beta function} is defined as
\[
\Beta(z_1,z_2)\defeq \int_0^1 t^{z_1-1}(1-t)^{z_2-1}\,dt.
\]

\begin{lemma}[Product Lemma]\label{lem:product} 
Let $\Ht_X$ and $\Ht_Y$ be counting functions on sets $X$ and $Y$ respectively. Assume for all $x\in X$ and $y\in Y$ that $\Ht_X(x)\neq 0$ and $\Ht_Y(y)\neq 0$. Suppose that $c_X,c_Y\in \bbR_{\neq 0}$ and $\alpha_X,\alpha_Y, \beta_X,\beta_Y\in \bbQ$ with $\alpha_X\geq\alpha_Y > 0$ and $\beta_Y \geq 0$ are such that
\[
\#\{x\in X: \Ht_X(x)\leq B\} \sim c_X B^{\alpha_X}\log(B)^{\beta_X}
\]
and 
\[
\#\{y\in Y: \Ht_Y(x)\leq B\} \sim c_Y B^{\alpha_Y}\log(B)^{\beta_Y}.
\]
Let 
\begin{align*}
c_1 &\defeq \Beta(\beta_Y+1,\beta_X+1) c_Yc_X\alpha_X \\
c_2&\defeq c_X c_Y \alpha_Y  \sum_{k=1}^{\infty} \frac{\log(k)^{\beta_Y}}{k^{\alpha_X-\alpha_Y+1}}.
\end{align*}
Then $\#\{(x,y)\in X\times Y: \Ht_X(x)\cdot \Ht_Y(y)\leq B\}$ is asymptotic to
\begin{align*}
 \begin{cases}
c_1  B^{\alpha_X}\log(B)^{\beta_X+\beta_Y+1} & \text{ if } \alpha_X=\alpha_Y, \\
c_2 B^{\alpha_X}\log(B)^{\beta_X} & \text{ if } \alpha_X>\alpha_Y.
\end{cases}
\end{align*}
\end{lemma}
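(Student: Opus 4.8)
The plan is to reduce the two-variable count to a one-variable sum by conditioning on the value of $\Ht_Y(y)$, and then to analyze the resulting sum by comparison with an integral. Write $N_X(B) \defeq \#\{x \in X : \Ht_X(x) \leq B\}$ and $N_Y(B) \defeq \#\{y \in Y : \Ht_Y(y) \leq B\}$, so that by hypothesis $N_X(B) \sim c_X B^{\alpha_X}\log(B)^{\beta_X}$ and $N_Y(B) \sim c_Y B^{\alpha_Y}\log(B)^{\beta_Y}$. The key identity is
\[
\#\{(x,y) \in X \times Y : \Ht_X(x)\Ht_Y(y) \leq B\} = \int_{1^-}^{B} N_X(B/t)\, dN_Y(t),
\]
where the right-hand side is a Riemann--Stieltjes integral against the (nondecreasing, right-continuous, integer-valued) function $t \mapsto N_Y(t)$; equivalently this is $\sum_{y \in Y,\, \Ht_Y(y) \leq B} N_X(B/\Ht_Y(y))$. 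The lower limit reflects that both heights are bounded below (indeed, one may assume after rescaling that all heights are $\geq 1$, or just track the harmless boundary contribution).

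First I would substitute the asymptotics: for the region where $t$ is bounded away from $1$ and from $B$, $N_X(B/t) \sim c_X (B/t)^{\alpha_X}\log(B/t)^{\beta_X}$, so the integral is, up to lower-order error,
\[
c_X B^{\alpha_X} \int_1^B t^{-\alpha_X}\log(B/t)^{\beta_X}\, dN_Y(t).
\]
Now integrate by parts (or substitute $N_Y(t) \sim c_Y t^{\alpha_Y}\log(t)^{\beta_Y}$, using $dN_Y(t) \approx \alpha_Y c_Y t^{\alpha_Y - 1}\log(t)^{\beta_Y}\,dt$ after Abel summation) to get the main term proportional to
\[
c_X c_Y \alpha_Y B^{\alpha_X} \int_1^B t^{\alpha_Y - \alpha_X - 1}\log(t)^{\beta_Y}\log(B/t)^{\beta_X}\, dt.
\]
In the case $\alpha_X = \alpha_Y$ the integrand is $t^{-1}\log(t)^{\beta_Y}\log(B/t)^{\beta_X}$; the substitution $t = B^u$, $dt/t = \log(B)\,du$ turns this into $\log(B)^{\beta_X+\beta_Y+1}\int_0^1 u^{\beta_Y}(1-u)^{\beta_X}\,du = \log(B)^{\beta_X+\beta_Y+1}\Beta(\beta_Y+1,\beta_X+1)$, and multiplying by the factor $c_X c_Y \alpha_Y = c_X c_Y \alpha_X$ gives exactly $c_1 B^{\alpha_X}\log(B)^{\beta_X+\beta_Y+1}$. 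In the case $\alpha_X > \alpha_Y$ the factor $t^{\alpha_Y - \alpha_X - 1}$ makes the integral converge near $t = \infty$, so it is dominated by bounded $t$; here the integral representation is not the cleanest tool and I would instead work directly with the sum $\sum_{y} N_X(B/\Ht_Y(y))$. Partitioning $Y$ into dyadic-free pieces by the integer part of $\Ht_Y(y)$ is awkward since $\Ht_Y$ need not be integer-valued; instead, use Abel summation on $\sum_{k \geq 1} \big(N_Y(k) - N_Y(k-1)\big)$-type increments, or more cleanly note $\sum_{\Ht_Y(y) \leq B} N_X(B/\Ht_Y(y)) \sim c_X B^{\alpha_X} \sum_{\Ht_Y(y) \leq B} \Ht_Y(y)^{-\alpha_X}\log(B/\Ht_Y(y))^{\beta_X}$, and since $\sum_{\Ht_Y(y) \leq B} \Ht_Y(y)^{-\alpha_X}$ converges (comparing to $\int t^{-\alpha_X} dN_Y(t)$, which converges as $\alpha_X > \alpha_Y$), the tail $\Ht_Y(y) > $ any fixed threshold contributes negligibly, the $\log(B/\Ht_Y(y))^{\beta_X} \sim \log(B)^{\beta_X}$ factor can be pulled out, and what remains is $\log(B)^{\beta_X}$ times $c_X B^{\alpha_X}\sum_{y \in Y}\Ht_Y(y)^{-\alpha_X}$. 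It then remains to identify this last sum with the stated constant: writing $\sum_{y}\Ht_Y(y)^{-\alpha_X} = \alpha_X \int_1^\infty N_Y(t) t^{-\alpha_X - 1}\,dt$ via partial summation and inserting $N_Y(t) \sim c_Y t^{\alpha_Y}\log(t)^{\beta_Y}$, together with the assertion that $\sum_{k \geq 1}\log(k)^{\beta_Y} k^{-(\alpha_X - \alpha_Y + 1)}$ is (up to the normalization built into how the problem's $c_2$ is phrased) the right constant — the statement packages this as $c_2 = c_X c_Y \alpha_Y \sum_{k=1}^\infty \log(k)^{\beta_Y}/k^{\alpha_X-\alpha_Y+1}$, which is the partial-summation form of $\int_1^\infty N_Y(t) t^{-\alpha_X-1}\,dt$ scaled by $\alpha_X c_X$, so one checks the bookkeeping carefully here.

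The main obstacle I anticipate is controlling the error terms uniformly enough to justify replacing $N_X$ and $N_Y$ by their asymptotics inside the integral/sum over the \emph{entire} range, including the endpoints $t \approx 1$ and $t \approx B$ where the asymptotic $N_X(B/t) \sim c_X(B/t)^{\alpha_X}\log(B/t)^{\beta_X}$ degenerates (the $\log$ factor vanishes or blows up). Near $t \approx B$ one uses that $N_X(B/t) \leq N_X(O(1))$ is bounded, so that stretch contributes $O(N_Y(B)) = O(B^{\alpha_Y}\log(B)^{\beta_Y})$, which is lower order than $B^{\alpha_X}\log(B)^{\cdots}$ precisely because $\alpha_X \geq \alpha_Y$ (in the equality case one must check the power of $\log$ too, but $N_X(O(1)) = O(1)$ makes this contribution $O(B^{\alpha_Y}\log(B)^{\beta_Y})$ which is indeed dominated by $B^{\alpha_X}\log(B)^{\beta_X+\beta_Y+1}$). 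Near $t \approx 1$ one splits off $t \in [1, T_0]$ for a slowly growing cutoff $T_0 = T_0(B)$ (e.g. $T_0 = \log(B)$) and bounds that piece crudely; the complementary main range then has $\log(t), \log(B/t) \asymp \log(B)$ up to $(1+o(1))$ factors, legitimizing the substitutions. Making these three regimes fit together — and in particular choosing cutoffs so that the $o(1)$ in the hypotheses on $N_X, N_Y$ propagates to an $o(1)$ in the final asymptotic rather than polluting the leading constant — is the delicate part; everything else is a standard beta-integral evaluation and partial summation.
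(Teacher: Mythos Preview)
Your approach is correct and essentially the same as the paper's: both condition on $\Ht_Y(y)$, replace $N_X(B/\Ht_Y(y))$ by its asymptotic, and then apply Abel summation/integration by parts together with the substitution $t=B^u$ (yielding the Beta integral) when $\alpha_X=\alpha_Y$, and the convergence of the resulting series when $\alpha_X>\alpha_Y$. The only cosmetic difference is that the paper works discretely, grouping $Y$ by integer part via $a(j)=\#\{y: j<\Ht_Y(y)\leq j+1\}$, rather than in Stieltjes-integral language; your caution about the endpoint error terms and about the ``bookkeeping'' for $c_2$ is well placed, as the paper's own Abel summation in that case produces $\alpha_X$ rather than the $\alpha_Y$ appearing in the stated $c_2$.
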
 

\begin{remark}
   Similar results to \cref{lem:product} can be found in \cite[Lemma 3.1 and Lemma 3.2]{Wan21}, \cite[Proposition 2]{FMT89}, and \cite[Lemma 7.1]{Den98}. The main difference in our result is that it allows for $\beta_X<0$.
\end{remark}

\begin{proof}
    Let $\lambda_1 \leq \lambda_2 \leq \dots$ be an increasing enumeration of the multiset $\{\Ht_Y(y): y \in X\}$, so that
    \begin{salign}\label{eq:product_lemma_1}
         &\hspace{-1cm} \#\{(x,y)\in X\times Y: \Ht_Y(x)\cdot \Ht_Y(y)\leq B\}\\ 
        &= \sum_{i=1}^{\#\{y\in Y: \Ht_X(y)\leq B\}} \#\{x\in X: \Ht_Y(x)\leq B/\lambda_i\}\\
        &= \sum_{i=1}^{\#\{y\in Y: \Ht_X(y)\leq B\}} c_X(B/\lambda_i)^{\alpha_X}\log(B/\lambda_i)^{\beta_X}+o\left((B/\lambda_i)^{\alpha_X}\log(B/\lambda_i)^{\beta_X}\right).
    \end{salign}
        For $j\in \Z_{\geq 0}$ let $a(j)\defeq \#\{y\in Y: j < \Ht_Y(y) \leq j+1\}$. Then (\ref{eq:product_lemma_1}) equals
    \begin{salign}\label{eq:product_lemma_2}
         & \sum_{j=1}^{B} a(j) c_X (B/j)^{\alpha_X}\log(B/j)^{\beta_X}+o\left(a(j) c_X (B/j)^{\alpha_X}\log(B/j)^{\beta_X}\right)\\
         & \qquad + O\left(a(0)c_Y \left(B/\lambda_1\right)^{\alpha_Y} \log(B/\lambda_1)^{\beta_Y}\right).\\
    \end{salign}
     The first term of (\ref{eq:product_lemma_2}) is
    \begin{equation}\label{eq:product_lemma_3}
        \sum_{j=1}^{B} a(j) c_X (B/j)^{\alpha_X}\log(B/j)^{\beta_X}
        =c_X B^{\alpha_x}\sum_{j=1}^{B} a(j) j^{-\alpha_X}\log\left(B/j\right)^{\beta_X}.
    \end{equation}
  
      Now we separate into two cases. 
        In the case $\alpha_X = \alpha_Y$, we have
        \begin{align*}
            \sum_{j=1}^k a(j)\sim c_Y k^{\alpha_Y}\log(k)^{\beta_Y}=c_Y k^{\alpha_X}\log(k)^{\beta_Y}.
        \end{align*}
        Therefore, using Abel summation, (\ref{eq:product_lemma_3}) is asymptotic to
        \begin{align*}
        -c_X B^{\alpha_X} \int_1^B \left(c_Yu^{\alpha_X}\log(u)^{\beta_Y}\right)\left(u^{-\alpha_X-1}\log(B/u)^{\beta_X}\left(\alpha_X - \frac{\beta_X}{\log(B/u)}\right)\right) du.
        \end{align*}
        Using the fact that $\log(B/u)^{-1}=O(1)$ and rearranging terms, this is asymptotic to
        \begin{align*}
        &c_X c_Y \alpha_X B^{\alpha_X} \int_1^B u^{-1}\log(u)^{\beta_Y}\log(B/u)^{\beta_X} du\\
        &\qquad = c_X c_Y \alpha_X B^{\alpha_X} \int_1^B u^{-1}\log(u)^{\beta_Y}\left(\log(B)-\log(u)\right)^{\beta_X} du.
        \end{align*}
        Making the substitution $v=\log(u)/\log(B)$, this equals
        \begin{align*}
        c_X c_Y \alpha_X B^{\alpha_X}\log(B)^{\alpha_X+\alpha_Y+1} \int_0^1 v^{\beta_Y}\left(1-v\right)^{\beta_X} dv=c_1 B^{\alpha_X}\log(B)^{\beta_X+\beta_Y+1}.
        \end{align*}

    Now consider the case $\alpha_X>\alpha_Y$. Applying Newton's binomial theorem, (\ref{eq:product_lemma_3}) becomes
    \begin{salign}\label{eq:product_lemma_c2_0}
        &\sum_{j=1}^{B} a(j) c_X (B/j)^{\alpha_X}\left(\log(B)-\log(j)\right)^{\beta_X}\\
       &\qquad  = \sum_{j=1}^{B} a(j) c_X (B/j)^{\alpha_X}\left(\log(B)^{\beta_X}-O\left(\log(B)^{\beta_X-1}\log(j)\right)\right).
    \end{salign}
    
    Using summation by parts, the main term of (\ref{eq:product_lemma_c2_0}) is
    \begin{salign}\label{eq:product_lemma_c2_1}
        \sum_{j=1}^{B} a(j) c_X (B/j)^{\alpha_X}\log(B)^{\beta_X}
        &=c_XB^{\alpha_X} \log(B)^{\beta_X} \sum_{k=1}^{B} \left(\frac{1}{k^{\alpha_X}}-\frac{1}{(k-1)^{\alpha_X}}\right)\sum_{j=1}^k a(j)\\
        &\sim c_XB^{\alpha_X} \log(B)^{\beta_X} \sum_{k=1}^{B} \frac{\alpha_X c_Y k^{\alpha_Y} \log(k)^{\beta_Y}}{k^{\alpha_X+1}}\\
        & \sim c_Xc_Y \alpha_X B^{\alpha_X} \log(B)^{\beta_X} \sum_{k=1}^{B} \frac{ \log(k)^{\beta_Y}}{k^{\alpha_X-\alpha_Y+1}}.
    \end{salign}
        Similarly, the error term of (\ref{eq:product_lemma_c2_0}) can be shown to be 
        \begin{equation}\label{eq:product_lemma_c2_2}
            O(B^{\alpha_X} \log(B)^{\beta_X-1})\sum_{k=1}^{B} \frac{ \log(k)^{\beta_Y+1}}{k^{\alpha_X-\alpha_Y+1}}.
        \end{equation}
     Since $\alpha_X>\alpha_Y$, the sum
    \begin{equation*}
        \sum_{k=1}^{B} \frac{ \log(k)^{\beta_Y}}{k^{\alpha_X-\alpha_Y+1}}
    \end{equation*} 
    converges as $B$ tends to infinity. Therefore (\ref{eq:product_lemma_c2_1}) is
    \begin{equation*}
        c_Xc_Y \alpha_X B^{\alpha_X} \log(B)^{\beta_X} \sum_{k=1}^{B} \frac{ \log(k)^{\beta_Y}}{k^{\alpha_X-\alpha_Y+1}} \sim c_2 B^{\alpha_X} \log(B)^{\beta_X} + O(B^{\alpha_Y+\epsilon}).
    \end{equation*} 
    Similarly, the error term (\ref{eq:product_lemma_c2_2}) is 
    \begin{equation*}
        O\left(B^{\alpha_X} \log(B)^{\beta_X-1}\right).
    \end{equation*} 
    These estimates, together with (\ref{eq:product_lemma_2}), prove the lemma in this case.
\end{proof}

\section{Counting fibers of conic bundles which have a rational point}\label{conics_sec}




    Let $\pi:Z\to X$ be a dominant proper morphism of smooth irreducible varieties over a perfect field $k$. For each point $x\in X$ let $k(x)$ denote the residue field of $x$, and choose a finite group $\Gamma_x$ through which the absolute Galois group $\Gal(\overline{k(x)}/k(x))$ acts on the irreducible components of 
    \[
    \pi^{-1}(x)_{\overline{k(x)}}\defeq \pi^{-1}(x)\otimes_{k(x)}\overline{k(x)}.
    \]
    Define $\delta_x(\pi)$ to be the quantity
\[
    \delta_x(\pi)\defeq \frac{\#\{\gamma\in \Gamma_x : \gamma \textnormal{ \small fixes an irred. component of $\pi^{-1}(x)_{\overline{k(x)}}$ of multiplicity $1$}\}}{\#\Gamma_x}.
    \]
    Let $X^{(1)}$ denote the set of codimension $1$ points of $X$, and define
    \[
    \Delta(\pi) \defeq \sum_{D\in X^{(1)}} (1-\delta_D(\pi)).
    \]

\begin{prop}\label{prop:conic_bundle_count}
    Let 
    \begin{align*}
\varphi:\mathbb{P}^1&\to \mathcal{P}(w_0, w_1,\dots,w_n)\\
    [x:y]&\mapsto [f_0(x,y):f_1(x,y): \cdots : f_n(x,y)]
    \end{align*}
    be a morphism of stacks over $K$ for 
    which there exists a $\delta\in \Z_{>0}$ such that 
    the $f_i$ are homogeneous of degree $\delta w_i$. Let
    \[
    \pi: Z\to \mathbb{P}^1
    \]
    be a non-singular conic bundle over $K$, all of whose non-split fibers lie above rational points. Assume there exists a smooth fiber of $\pi$ with a rational point. Then
    \[
    \#\{x\in \mathbb{P}^1(K) : \Ht_{(w_0,\dots,w_n)}(\varphi(x))\leq B,\ x\in \pi(Z(K))\}\ll \frac{B^{2/\delta}}{\log(B)^{\Delta(\pi)}}.
    \]
    If $K=\Q$ or $\Delta(\pi)=0$ then this bound is sharp.
\end{prop}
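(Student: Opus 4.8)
The plan is to reduce the statement to known counting results for fibrations with a fiber containing a rational point, namely the work of Loughran--Smeets and Loughran--Matthiesen. First I would set up the relationship between the height $\Ht_{(w_0,\dots,w_n)}(\varphi(x))$ and a more standard anticanonical-type height on $\mathbb{P}^1$: since the $f_i$ are homogeneous of degree $\delta w_i$, for $x = [a:b] \in \mathbb{P}^1(K)$ written in coprime integral (or $\mathcal{O}_K$-integral) coordinates one has $\Ht_{(w_0,\dots,w_n)}(\varphi(x)) \asymp H(x)^{\delta}$ up to bounded factors, where $H$ is the standard height on $\mathbb{P}^1(K)$ — modulo the usual subtlety that the $f_i$ may share common factors at finitely many primes, which only changes things by $O(1)$ and does not affect the asymptotic. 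Thus the left-hand count is, up to constants, $\#\{x \in \mathbb{P}^1(K) : H(x) \leq B^{1/\delta},\ x \in \pi(Z(K))\}$, i.e. the number of rational points on $\mathbb{P}^1$ of bounded height above which the conic bundle $\pi$ has a rational fiber.

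Next I would invoke the general upper bound: for a non-singular conic bundle $\pi : Z \to \mathbb{P}^1$ over a number field, the number of $x \in \mathbb{P}^1(K)$ with $H(x) \leq T$ lying in the image $\pi(Z(K))$ is $O(T^2/\log(T)^{\Delta(\pi)})$, where $\Delta(\pi)$ is the sum over closed points $D$ of $\mathbb{P}^1$ of $1 - \delta_D(\pi)$. This is exactly the content of Loughran--Smeets \cite[Theorem 1.3]{LS16} (and its refinements), applied to the fibration $\pi$; the hypotheses that the non-split fibers lie over rational points and that $\pi$ is a non-singular conic bundle guarantee the local conditions are of the required shape, and the assumption of a smooth fiber with a rational point ensures $Z(K)$ is Zariski dense so that the count is over a genuinely nonempty set but still satisfies the upper bound. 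Substituting $T = B^{1/\delta}$ gives $O(B^{2/\delta}/\log(B)^{\Delta(\pi)})$, since $\log(B^{1/\delta}) = \delta^{-1}\log B \asymp \log B$. This establishes the upper bound.

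For sharpness, I would split into the two stated cases. When $\Delta(\pi) = 0$, every fiber is split (or becomes so generically in the relevant sense) and a positive proportion of rational points of height $\leq T$ lie in $\pi(Z(K))$ by a Hasse-principle/weak-approximation argument for conics over $\mathbb{P}^1$; here one uses that a smooth conic over a number field with a rational point has many rational points, combined with the fact that the local conditions cut out by $\pi$ at each place are either everything or have positive density, and these are compatible (no Brauer--Manin obstruction for $\mathbb{P}^1$). When $K = \mathbb{Q}$, the matching lower bound $\gg T^2/\log(T)^{\Delta(\pi)}$ follows from Loughran--Matthiesen \cite{LM24}, whose main theorem establishes the expected asymptotic lower bound (of the correct order) for the number of fibers of such fibrations over $\mathbb{P}^1_{\mathbb{Q}}$ possessing a rational point — this is where the restriction to $\mathbb{Q}$ enters, as the additive-combinatorics input (sieve/descent for norm forms and conics, via work on linear equations in primes) is currently available over $\mathbb{Q}$. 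Again substituting $T = B^{1/\delta}$ converts this into the claimed lower bound for the original count.

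The main obstacle is the lower bound over $\mathbb{Q}$: one must check that the conic bundle $\pi$, after the change of variables identifying $\mathbb{P}^1$ with the base, satisfies the precise hypotheses of the Loughran--Matthiesen theorem (smoothness, the non-split fibers being defined over rational points, and the existence of a smooth rational fiber are exactly designed for this), and that the height $H$ used there is comparable to ours — the latter is harmless. I would also need to be slightly careful that the reduction $\Ht_{(w_0,\dots,w_n)}(\varphi(x)) \asymp H(x)^\delta$ does not degenerate, i.e. that $\varphi$ is a genuine morphism (so the $f_i$ have no common zero on $\mathbb{P}^1$), which is part of the hypothesis; then the comparison of heights is a standard consequence of the $f_i$ defining a finite map, with the implied constants absorbed into the $\ll$ and $\asymp$.
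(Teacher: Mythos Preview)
Your proposal is correct and follows essentially the same route as the paper: reduce the weighted height $\Ht_{(w_0,\dots,w_n)}(\varphi(x))$ to the standard height on $\mathbb{P}^1$ via the observation that $\varphi^*\mathcal{O}(1)\cong\mathcal{O}_{\mathbb{P}^1}(\delta)$, then cite Loughran--Smeets for the upper bound and Loughran--Matthiesen over $\mathbb{Q}$ for sharpness. Two small points: the paper invokes \cite[Theorem~1.2]{LS16} (not~1.3) for the upper bound, with the Hasse principle for conics providing the needed input, and for the $\Delta(\pi)=0$ case it simply cites \cite[Theorem~1.3]{LS16} rather than arguing directly via positive density of split fibers as you sketch.
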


\begin{proof}
    As $\deg(f_i)=\delta w_i$, we find that $\varphi^{\ast}\mathcal{O}_{\mathcal{P}(w_0,\dots,w_n)}(1)$ is linearly equivalent  to $\mathcal{O}_{\mathbb{P}^1}(\delta)$ as line bundles on $\mathbb{P}^1$. Therefore,
    \[
    \Ht_{(w_0,\dots,w_n)}(\varphi(x))=\Ht_{\varphi^{\ast}\mathcal{O}(\delta)}(x)=\Ht_{(1,1)}(x^\delta)+O(1),
    \]
    where the error term $O(1)$ accounts for the possible difference of height functions (i.e., the possibly different metrizations of $\varphi^{\ast}\mathcal{O}_{\mathcal{P}(w_0,\dots,w_n)}(1)$ and $\mathcal{O}_{\mathbb{P}^1}(\delta)$).
    
    Since the Hasse principle holds for conics, the desired result follows from 
    \cite[Theorem 1.2]{LS16}. When $\Delta(\pi)=0$ this is sharp by \cite[Theorem 1.3]{LS16}. That this is sharp in the case $K=\Q$ follows from \cite[Corollary 1.2]{LM24}. 
\end{proof}


 \section{The cases of discriminant $6$, $10$, and $22$}

In this section we restrict to the cases of discriminant $D\in \{6,10,22\}$. Main references for results used about the corresponding Shimura curves $\cX^{D}_\mu$ and their quotients are \cite{BG08} and \cite{NR15}. 

\subsection{The $\mathcal{O}$-PQM locus for $D\in\{6,10,22\}$}
The Shimura curves $\cX^D_\mu$ of genus $0$ with $D>1$ are those with $D \in \{6,10,22\}$.

It is known that, up to canonical isomorphism, the moduli stack $\cX^D_\mu$ is independent of the polarization $\mu$ (see, e.g., \cite[Lemma 43.6.23]{Voight} and \cite[\S 6]{Rot04b}). For this reason, we will suppress $\mu$ in our notations related to these curves and their images under the corresponding forgetful maps.

A result of Rotger \cite[Theorem 8.1]{Rot04b} states that, for $D \in \{6,10\}$, the $\mathcal{O}$-PQM locus $\mathcal{Q}_{\mathcal{O}}$ is irreducible, hence equal to $V_\mu^D$, and is birational to the full Atkin--Lehner quotient $X^D_\mu/\AL(X^D_\mu)$. The same follows in the $D=22$ case from Rotger's result since the locus is irreducible and this is a twisting case, as the quaternion algebra of discriminant $22$ is isomorphic to $\left( \dfrac{-22,2}{\Q} \right)$.
This justifies viewing the counts of points on $\cX^D/\AL(\cX^D)$ in these cases as counts of points on the corresponding $\mathcal{O}$-PQM loci of $\cA_2$.  

\subsection{Canonical rings}\label{can_ring_sec}
For $D\in \{6,10,22\}$ and $W \leq \AL(\cX^D)$ a subgroup of the Atkin--Lehner group, set $\cX^D_{W}\defeq \cX^D/W$ and let $X^{D}_{W}$ denote the coarse space of $\cX^D_{W}$. Let $\Omega$ denote the canonical bundle of the quotient stack $ \cX^D_W$, and let
\[
R^D_{W}\defeq \bigoplus_{d=0}^\infty H^0\left(\cX^D_W, \Omega^{\otimes d}\right)
\]
denote the canonical ring of $\cX^D_{W}$. 

Define $\cC^D_{W}$ analogously to how $\cC^D$ was defined in \cref{subsec:genus_two_curves}, but with $\cX^D=\cX^D_{\{\id\}}$ replaced by $\cX^D_{W}$. Let $C_{W}^D$ denote the coarse space of $\cC^D_{W}$.

\subsection{The discriminant $6$ case}\label{6_sec}

It follows from \cite[Proposition 3.2]{BG08} that the canonical ring of $\cX^6$ is
\begin{align}\label{eq:R_id}
R^6_{\{\id\}}\cong\Q[h_4,h_6,h_{12}] / (h_{12}^2+3h_6^4 + h_4^6).
\end{align}

The Atkin--Lehner group acts on the modular forms $h_4$, $h_6$, and $h_{12}$ as follows:
\begin{align*}
    h_4&=-w_2(h_4)=-w_3(h_4)=w_6(h_4)\\
    h_6&=w_2(h_6)=-w_3(h_6)=-w_6(h_6)\\
    h_{12}&=-w_2(h_{12})=w_3(h_{12})=-w_6(h_{12}).
\end{align*}

The ring of invariants of $R^6_{\{\textnormal{id}\}}$ with respect to the action of the involution $w_6= w_2\circ w_3\in \AL(\cX^6)$ is generated by $h_4, h_6^2, h_6h_{12}$, with the relation $((h_6h_{12})^2 + 3(h_6^2)^3 + h_4^6h_6^2=0$. Setting $x=h_4$, $y=h_6^2$, and $z=h_6h_{12}$, we find that
\begin{align}\label{eq:R_w6}
\begin{split}
R^6_{\langle w_6\rangle} &\cong \mathbb{Q}[h_4,h_6^2,h_6h_{12}]/(h_{6}^2h_{12}^2+3h_6^6+h_4^6h_6^2)\\
&\cong \mathbb{Q}[x,y,z]/(z^2 + 3y^3 + x^6y).
\end{split}
\end{align}

Similarly, we compute
\begin{align}\label{eq:canonical_rings6}
\begin{split}
R^6_{\langle w_3\rangle} &\cong \mathbb{Q}[h_4h_6,h_4^2,h_6^2,h_{12}] / \left( {h_{12}}^2+3h_6^4+h_4^6, (h_4h_6)^2-h_4^2h_6^2 \right) \\
&\cong \Q[w,x,y,z]/\left( z^2+3y^2+x^3, w^2-xy\right)\\
R^6_{\langle w_2 \rangle} &\cong  \mathbb{Q}[h_4^2,h_6,h_4h_{12}]/\left( (h_4h_{12})^2 + 3h_4^2h_6^4 + (h_4^2)^4 \right) \\
&\cong \Q[x,y,z]/\left(z^2+3xy^4+x^4\right)\\
R^6_{\AL(\cX^6)} &\cong \mathbb{Q}[h_4^2,h_6^2,h_4h_6h_{12}]/\left( \left(h_4h_6h_{12}\right)^2 + 3h_4^2(h_6^2)^3 + (h_4^2)^4h_6^2 \right)\\
&\cong \Q[x,y,z]/\left(z^2+3xy^3+x^4y\right). 
\end{split}
\end{align}

 From \cite[Proposition 3.6]{BG08} we know that the weight zero modular form $j=\left(4h_6^2/3h_4^3\right)^2$ on $\cX^6$ induces an isomorphism $j:C^6\to\mathbb{P}^1-\{0,\infty\}$. Taking Igusa invariants of the curve corresponding to $j\in \mathbb{P}^1-\{0,\infty\}$ induces a morphism
\begin{align*}
\sJ: \mathbb{P}^1-\{0,\infty\} &\to \cP(1,2,3,5)\\
j &\mapsto [12(j+1) : 6(j^2+j+1) : 4(j^3-2j^2+1) : j^3]. 
\end{align*}
We now give a description of the Mestre obstruction in each case.

\begin{prop}\label{prop:Mestre_obstructions_6}
  For each subgroup $W$ of the Atkin--Lehner group $\AL(\cX^6)$, outside of finitely many $j$, the Mestre obstruction on $C^6_{W}$ can be described by the Hilbert symbol given in \cref{tab:Mestre_obstruction}.
\end{prop}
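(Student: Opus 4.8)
The plan is to make the Mestre obstruction explicit along each of the five families $C^6_W$ and then diagonalize the conics that arise.

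Recall the mechanism. To a point $x \in \cP(1,2,3,5)(K)$ represented by Igusa invariants and with trivial geometric automorphism group, Mestre \cite{Mestre} attaches a conic $\mathcal{M}_x$ over $K$ -- a plane ternary quadratic form whose coefficients are explicit polynomials in the invariants -- such that $x$ lies in the image of $\sI$, i.e.\ the Mestre obstruction vanishes, if and only if $\mathcal{M}_x$ has a $K$-rational point. A conic over $K$ has a rational point if and only if its class in $\mathrm{Br}(K)[2]$ vanishes, i.e.\ the quaternion algebra it defines is split. So for each $W \leq \AL(\cX^6)$ it suffices to write $\mathcal{M}_x$ along the family $C^6_W$, diagonalize it, and identify the resulting quaternion algebra as a Hilbert symbol, using that such a symbol depends on its arguments only modulo squares.

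Concretely, for $W = \{\mathrm{id}\}$ one composes the isomorphism $j \colon C^6 \xrightarrow{\sim} \mathbb{P}^1 - \{0,\infty\}$ with the Igusa map $\sJ$ of \cref{6_sec} to express the Igusa invariants of the family as explicit polynomials in $j$, substitutes into Mestre's formulas, and obtains a conic over $\Q(j)$; the assertion of \cref{tab:Mestre_obstruction} is that after diagonalizing and clearing squares its arguments collapse to the constant symbol $(-6,2)_K$. This is exactly what Jordan's theorem \cite[Theorem 2.1.3]{Jor} predicts: a discriminant-$6$ QM abelian surface with trivial automorphisms has a model over a field $L$ containing its field of moduli if and only if $L$ splits the quaternion algebra $\left(\frac{-6,2}{\Q}\right)$. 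For the remaining $W$ one repeats the computation with the corresponding explicit family of genus two curves, which we read off from the canonical ring presentations of \cref{can_ring_sec} (following \cite{BG08}). These families differ from the $W = \{\mathrm{id}\}$ family by a quadratic twist recording the Atkin--Lehner action; since the Mestre conic of a quadratic twist $C^{(d)}$ differs from that of $C$ by a factor involving $d$, the identified symbol now genuinely depends on the parameter on $C^6_W$, producing the $W$-dependent rows of \cref{tab:Mestre_obstruction}.

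Two points need attention. The first is the ``outside finitely many $j$'' clause: one must exclude the parameter values at which the associated genus two curve has geometric automorphism group strictly larger than $C_2$ (so Mestre's criterion does not apply in the stated form), at which the conic $\mathcal{M}_x$ degenerates (the vanishing locus of the relevant discriminant), or at which the chosen diagonalization acquires a zero or a pole; away from this explicit finite set the argument runs verbatim. The second, and the genuine labor, is that the conic produced by Mestre is unwieldy, so the substance of the proof is the bookkeeping involved in diagonalizing it and reducing the arguments modulo squares down to the clean expressions in the table -- this is the step I expect to be the main obstacle, although it is purely mechanical and can be carried out with a computer algebra system. As an internal consistency check, the symbols attached to $C^6_{W'}$ and $C^6_W$ for $W' \subseteq W$ should be compatible under the natural maps relating these families.
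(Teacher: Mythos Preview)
Your outline is correct in spirit, but you are proposing to redo a computation that the paper simply quotes. The key input you are missing is that Baba--Granath \cite[Propositions 3.12 and 3.13]{BG08} have already diagonalized the Mestre conic along $C^6$: away from finitely many $j$, the obstruction is the Hilbert symbol
\[
(-6j,\, -2(27j+16))_K.
\]
Once you have this, there is no need to touch Mestre's raw ternary form or invoke a computer algebra system. The entire content of the paper's proof is a short manipulation of this symbol using the canonical ring relations from \cref{can_ring_sec}. For $W=\{\id\}$ one writes $j = 16h_6^4/9h_4^6$, clears denominators (which only changes the entries by squares), and uses the relation $h_{12}^2 + 3h_6^4 + h_4^6 = 0$ to recognize $-2(3h_6^4+h_4^6) = 2h_{12}^2$, collapsing the symbol to $(-6,2)_K$. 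For each nontrivial $W$ one does the same: express $j$ in the generators $x,y,z,\ldots$ of $R^6_W$, and use the single defining relation of that ring to absorb one factor into a square. Each case is a three- or four-line calculation by hand.

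Your description of the nontrivial-$W$ cases as ``quadratic twists of the $\{\id\}$ family, so the Mestre conic picks up a factor of $d$'' is morally right but imprecise, and if you tried to execute it that way you would have to identify the twisting parameter $d$ on each quotient, which is exactly what the canonical-ring manipulation does cleanly. The paper's route and yours would ultimately converge, but the paper's is an order of magnitude shorter because it starts from the Baba--Granath symbol rather than from Mestre's conic.
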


\begin{table}[h]
\centering
\renewcommand{\arraystretch}{1.2}
\begin{tabular}{cc}
\toprule
$W$ & Mestre obstruction\\
\midrule
$\{\id\}$ & $(-6,2)_K$ \\
$\langle w_2 \rangle$ & $(-6x,2)_K$\\
$\langle w_3 \rangle$ & $(-6x,2x)_K$\\
$\langle w_6\rangle$ & $(-6,-2y)_K$\\
$\AL(\cX^6)$ & $(-6x,2y)_K$\\
\bottomrule
\end{tabular}
\caption{Mestre obstructions on $C_W^{6}$.}
\label{tab:Mestre_obstruction}
\end{table}

\begin{proof}
    By \cite[Proposition 3.12 and Proposition 3.13]{BG08}, the Mestre obstruction to a curve $C$, corresponding to a point of $C^6$, with rational Igusa invariants being defined over a number field $K$ can be described by the Hilbert symbol
\begin{equation}\label{eq:Mestre_6}
(-6j, -2(27j+16))_K
\end{equation}
outside of the finitely many $j$ such that the corresponding curve has extra automorphisms (specifically, $j = -16/27$ corresponding to the genus $2$ curve $C_{(-24)}$ with $-24$-CM and $j = 81/64$ corresponding to the genus $2$ curve $C_{(-19)}$ with $-19$-CM from \cite[\S 3.3]{BG08}).

 By our expression (\ref{eq:R_id}) for the canonical ring $R^6_{\{\id\}}$ we have $h_{12}^2+3h_6^4 + h_4^6=0$. Also, we have by definition that $j=16h_6^4/9h_4^6$.  Therefore, we can rewrite the Mestre obstruction as follows:
\begin{align*}
    \left(-6\frac{16h_6^4}{9h_4^6}, -2\left(27\frac{16h_6^4}{9h_4^6}+16\right)\right)_K
    &=(-6(16h_6^4), -2(27(16h_6^4)-16(9h_4^6))_K\\
    &=(-6,-2(3h_6^4 + h_4^6))_K\\
    &=(-6,2 h_{12}^2)_K\\
    &=(-6,2)_K.
\end{align*}

The other cases are similar, starting with (\ref{eq:Mestre_6}) and using the relations from the expressions (\ref{eq:R_w6}) and (\ref{eq:canonical_rings6}) for the canonical rings.
\end{proof}


Let $\pi:\mathcal{B}\to\mathbb{P}^1_j$ be the conic bundle corresponding to the Mestre obstruction on $C_{W}^6$. We have the following commutative diagram:

\begin{equation}\label{diagram:commutative_triangle}
\begin{tikzcd}
C^6_{W}(K) \arrow{r}{\sI} \arrow[rd, swap,
"j"] & \cP(1,2,3,5)(K) \\
 & \mathbb{P}_j^1(K)-\{0,\infty\} \arrow[swap]{u}{\sJ}\\
 & \mathcal{B.} \arrow{u}{\pi}
\end{tikzcd}
\end{equation}
Recall from the introduction that we define the function $\overline{\mathcal{N}}^6(W,K,B)$ to be that which counts, up to isomorphism over $\overline{K}$, curves $C$ in $C_{W}^6(K)$ which have a model over $K$ and satisfy $\overline{\Ht}(C)\leq B$.

\begin{theorem}\label{main_thm_6}
We have that 
\[
\mathcal{\overline{N}}^6(W,K,B)\ll
\begin{cases}
   B^2/\log(B) &\text{ if } W\in\{\AL(\cX^6),\langle w_3\rangle\}, \\
    B^2/\log(B)^{1/2} &\text{ if } W\in\{\langle w_2\rangle, \langle w_6\rangle\},\\
    B^2 &\text{ if } W=\{\textnormal{id}\} \text{ and } (-6,2)_K=1, \\
    0 &\text{ if } W=\{\textnormal{id}\} \text{ and } (-6,2)_K=-1.
\end{cases}
\]
These bounds are sharp when $K=\Q$ or $W=\{\id\}$.
\end{theorem}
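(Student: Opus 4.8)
The plan is to combine the description of the Mestre obstruction from \cref{prop:Mestre_obstructions_6} with the conic-bundle counting result \cref{prop:conic_bundle_count}, applied along the commutative diagram \eqref{diagram:commutative_triangle}. First I would record that, by \cite[Proposition 3.6]{BG08} and the explicit formula for $\sJ$, the Igusa height of a curve $C$ corresponding to $j\in\bbP^1_j-\{0,\infty\}$ satisfies $\overline{\Ht}(C)=\Ht_{(1,2,3,5)}(\sJ(j))$, and since $\sJ$ is given by forms of weighted degree $3$ (i.e.\@ $\delta=3$ in the language of \cref{prop:conic_bundle_count}), this height is comparable to $\Ht_{(1,1)}([x:y])^3$ up to $O(1)$, where $j=x/y$ — giving the exponent $2/\delta=2/3$? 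No: here we must be careful, because $j$ itself is a weight-zero modular form and the map $j\colon C^6_W\to\bbP^1$ is \emph{not} the map $\varphi$ of \cref{prop:conic_bundle_count}; rather $\sJ\colon\bbP^1_j\to\cP(1,2,3,5)$ is, with $(f_0,\dots,f_3)$ of degrees $(3,6,9,15)$ so $\delta=3$. Thus $\overline{\Ht}(C)\asymp \Ht_{(1,1)}(j)^{3}$, and a bound of the form $B^{2/3}/\log(B)^{\Delta}$ on the left of \cref{prop:conic_bundle_count} will need to be re-expressed: counting $j$ with $\Ht_{(1,1)}(j)^3\le B$ is counting $j$ with $\Ht_{(1,1)}(j)\le B^{1/3}$, i.e.\@ $\ll (B^{1/3})^{2}/\log(B^{1/3})^{\Delta}\asymp B^{2/3}/\log(B)^{\Delta}$.

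Hmm — but the theorem claims $B^2$, not $B^{2/3}$. So the map $\varphi$ to which \cref{prop:conic_bundle_count} is actually applied must be different for each $W$: it should be the composition $\cP(1,2,3,5)\leftarrow\bbP^1_j$ precomposed with the \emph{degree-one} parametrization of $X^6_W\cong\bbP^1$, but weighted by the canonical-ring generators, so that the relevant $\delta$ is $1$ and the height on the source $\bbP^1$ (the coarse Atkin--Lehner quotient, with its own coordinate) is what is being bounded. In other words: the height $\overline{\Ht}$ pulled back to the coordinate $t$ on $X^6_W\cong\bbP^1$ via the weighted generators of $R^6_W$ is $\asymp\Ht_{(1,1)}(t)^{\delta_W}$ for an appropriate $\delta_W$, and one computes $\delta_W$ from the weighted-degree data in \eqref{eq:R_id}, \eqref{eq:R_w6}, \eqref{eq:canonical_rings6}; the exponent $2$ in the theorem forces $\delta_W=1$ for these $W$. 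So the correct skeleton is: (i) identify, for each $W$, the generator coordinates and the morphism $\varphi_W\colon\bbP^1\to\cP(w_0,\dots,w_n)$ from $\sJ$ and the canonical-ring presentation, read off $\delta=1$; (ii) identify the conic bundle $\pi_W\colon\cB_W\to\bbP^1$ cut out by the Mestre Hilbert symbol in \cref{tab:Mestre_obstruction}; (iii) check its hypotheses — nonsingular, all non-split fibres over rational points, and the existence of one smooth fibre with a rational point (the latter by exhibiting a single QM abelian surface with the right field of moduli, or by a Hilbert-symbol specialization); (iv) compute $\Delta(\pi_W)=\sum_{D}(1-\delta_D(\pi_W))$ from the bad fibres of the conic; (v) apply \cref{prop:conic_bundle_count} to conclude $\overline{\mathcal N}^6(W,K,B)\ll B^2/\log(B)^{\Delta(\pi_W)}$, and sharpness when $K=\Q$ or $\Delta=0$.

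For the $W=\{\id\}$ case the Mestre obstruction is the \emph{constant} Hilbert symbol $(-6,2)_K$ (independent of $j$), so there is no conic bundle to speak of: either $(-6,2)_K=1$, in which case \emph{every} $j$ lifts and the count is just the number of $j\in\bbP^1(K)-\{0,\infty\}$ with $\Ht\le B$, which is $\asymp B^2$ by the standard count of rational points of bounded height on $\bbP^1$ over a number field (Schanuel), or $(-6,2)_K=-1$ and the count is $0$. This also gives the claimed sharpness for $W=\{\id\}$ over any $K$. For the other four $W$ the count of fibres with a point is $\ll B^2/\log(B)^{\Delta(\pi_W)}$ with $\Delta=1$ when $W\in\{\AL(\cX^6),\langle w_3\rangle\}$ and $\Delta=1/2$ when $W\in\{\langle w_2\rangle,\langle w_6\rangle\}$; the values $1$ and $1/2$ come from counting the bad fibres of the conic associated to the symbols $(-6x,2x)$, $(-6x,2y)$ versus $(-6x,2)$, $(-6,-2y)$ — roughly, two ramified places of the conic bundle (each contributing $1/2$) in the first two cases and one in the latter two, after accounting for which of $-6$, $2$, $x$, $y$ become squares or vanish along $X^{(1)}$. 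Sharpness for $K=\Q$ is then \cite[Corollary 1.2]{LM24} as invoked in \cref{prop:conic_bundle_count}.

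\textbf{Main obstacle.} The delicate point is step (iv): correctly computing $\delta_D(\pi_W)$, hence $\Delta(\pi_W)$, at each codimension-one point of $\bbP^1$ where the Mestre conic degenerates — one must determine, from the vanishing/square behaviour of the quaternary entries $(-6x,2x)_K$ etc.\@ in the local residue field, whether the fibre is split, and I expect the accounting of the $\log$-power ($1$ vs.\@ $1/2$) to hinge entirely on getting these local computations right, including the behaviour at $x=0$, $x=\infty$, and $y=0$ and at primes dividing $6$. The secondary subtlety is verifying the "smooth fibre with a rational point" hypothesis of \cref{prop:conic_bundle_count} uniformly in $K$ — here one can lean on the known existence of PQM surfaces and on \cref{prop:Mestre_obstructions_6} to exhibit an explicit $j$ where the Hilbert symbol is trivial.
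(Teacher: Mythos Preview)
Your overall strategy matches the paper's exactly: apply \cref{prop:conic_bundle_count} to the Mestre conic bundle over $\bbP^1_j$ via $\varphi=\sJ$, compute $\Delta(\pi)$ case by case from \cref{tab:Mestre_obstruction}, and treat $W=\{\id\}$ separately since the symbol is constant. Two points need correction.

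First, the $\delta$ confusion is unnecessary. The map $\sJ$ sends $j$ to $[12(j+1):6(j^2+j+1):4(j^3-2j^2+1):j^3]$ in $\cP(1,2,3,5)$; homogenizing with $j=x/y$ and clearing with $\lambda=y$ gives $f_0=12(x+y)$, $f_1=6(x^2+xy+y^2)$, $f_2=4(x^3-2x^2y+y^3)$, $f_3=x^3y^2$, of degrees $(1,2,3,5)=\delta\cdot(1,2,3,5)$ with $\delta=1$. Your claimed degrees $(3,6,9,15)$ are simply wrong, and there is no need to reparametrize separately for each $W$ --- the paper applies $\varphi=\sJ$ uniformly.

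Second, your accounting for $\langle w_3\rangle$ is not quite right, though it yields the correct $\Delta=1$. You say ``two ramified places each contributing $1/2$'', but the symbol $(-6x,2x)$ gives a conic bundle with a \emph{single} bad fibre which is a double line (multiplicity $2$), so $\delta_D(\pi)=0$ there and $1-\delta_D=1$. This is what the paper records: for $\AL(\cX^6)$ one has two rank-$2$ degenerate fibres (each contributing $1/2$), while for $\langle w_3\rangle$ one has a single rank-$1$ degenerate fibre (contributing $1$). The distinction does not affect the final exponent but matters if you want the argument to be correct rather than numerologically lucky.

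With these two fixes your sketch is the paper's proof.
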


\begin{proof}
      For $W\neq \{\textnormal{id}\}$ we apply \cref{prop:conic_bundle_count} with $\varphi=\sJ$. By the definition of $\sJ$ we see that $\delta=1$. Although the Mestre obstructions computed in \cref{prop:Mestre_obstructions_6} ignore finitely many points in $\mathbb{P}^1_j$ (i.e., finitely many curves in $C^{6}_{W}$), this will not affect our asymptotic results. We now compute $\Delta(\pi)$.  Note that the smooth fibers of $\pi$ will only have a single irreducible component, and thus $\delta_x(\pi)=1$ in these cases. It remains to study the geometrically reducible fibers. From \cref{prop:Mestre_obstructions_6} we have that the conic bundle $\pi:\mathcal{B}\to \mathbb{P}^1_j$ has exactly two reducible fibers when $W=\AL(\cX^{6})$ and exactly one reducible fiber when $W\in\{\langle w_2\rangle, \langle w_6\rangle\}$. In each of these cases the reducible fiber $\pi^{-1}(x)_{\overline{K}}$ has two irreducible components and $\Gal(\overline{K}/K)$ acts transitively on these components, so that $\delta_x(\pi)=1/2$. In the case $W=\langle w_3\rangle$ we have a single reducible component of multiplicity $2$. Therefore $\delta_x(\pi)=0$. It follows that 
    \[
    \Delta(\pi)=\begin{cases}
        1 \text{ if } W\in \{\AL(\cX^{6}),\langle w_3\rangle\},\\
        1/2 \text{ if } W\in\{\langle w_2\rangle, \langle w_6\rangle\}.
    \end{cases}
    \]
    Applying \cref{prop:conic_bundle_count} gives the desired result.

      When $W=\{\id\}$ we either have $(-6,2)_K=1$, in which case the Mestre obstruction vanishes and the desired result follows from \cref{prop:conic_bundle_count}, or we have $(-6,2)_K=-1$, in which case $C^6_{\AL(\cX^6)}(K)=\emptyset$. 
\end{proof}

We now bound the number of $K$-isomorphism classes.

\begin{theorem}\label{main_thm_6_K}
We have that 
\[
\mathcal{N}^6(W,K,B)\ll
\begin{cases}
   B^2/\log(B) &\text{ if } W\in\{\AL(\cX^6),\langle w_3\rangle\}, \\
    B^2/\log(B)^{1/2} &\text{ if } W\in\{\langle w_2\rangle, \langle w_6\rangle\},\\
    B^2 &\text{ if } W=\{\textnormal{id}\} \text{ and } (-6,2)_K=1, \\
    0 &\text{ if } W=\{\textnormal{id}\} \text{ and } (-6,2)_K=-1.
\end{cases}
\]
These bounds are sharp when $K=\bbQ$ or $W=\{\id\}$.
\end{theorem}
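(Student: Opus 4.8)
The plan is to bootstrap from the count of $\overline{K}$-isomorphism classes in \cref{main_thm_6}, controlling for each such class how many $K$-isomorphism classes it contributes at bounded height. Twisting a genus two curve over $K$ preserves its $\overline{K}$-isomorphism class (hence its image in the coarse space $C^6_W$), and $\overline{\Ht}$ depends only on that class, so
\[
\mathcal{N}^6(W,K,B)=\sum_{C}\#\bigl\{\,C'\ :\ C'\cong C\text{ over }\overline{K},\ \Ht(C')\le B\,\bigr\},
\]
where the outer sum runs over $\overline{K}$-isomorphism classes of curves in $C^6_W(K)$ that admit a model over $K$ --- precisely the classes enumerated by $\overline{\mathcal{N}}^6(W,K,\cdot)$.

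By \eqref{eq:height}, for fixed $C$ the inner count is governed only by the factor $|N_{K/\Q}(\disc_K(K_{C'}))|$, since $\overline{\Ht}(C')=\overline{\Ht}(C)$. All but finitely many members of the family $C^6_W$ have geometric automorphism group $C_2$ (for $W=\{\id\}$ the exceptions are the two CM curves noted in the proof of \cref{prop:Mestre_obstructions_6}), so I would split the outer sum into a generic part and a finite exceptional part. The exceptional classes contribute $O(B^{1/2}\log(B)^2)$ in total by \cref{proposition:Wright} and \cref{proposition:D4}, which is negligible against every main term. For a generic $C$ the $K$-twists are classified by $K^\times/(K^\times)^2$, and after a fixed translation (by the class relating $C$ to the twist-minimal curve $C_{\min}$) the condition $\Ht(C')\le B$ becomes $|N_{K/\Q}(\disc_K(L))|\le B/\overline{\Ht}(C)$ for the associated quadratic (or trivial) extension $L/K$; by Wright's theorem \cite{Wri89} this inner count is $\sim c_K\cdot B/\overline{\Ht}(C)$, with $c_K>0$ the density of quadratic extensions of $K$ ordered by norm of discriminant, independent of $C$.

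Finally, I would feed this into the Product Lemma (\cref{lem:product}) with $X$ the set of generic $\overline{K}$-isomorphism classes under $\Ht_X=\overline{\Ht}$, so that $\#\{x:\Ht_X(x)\le B\}$ equals $\overline{\mathcal{N}}^6(W,K,B)$ up to $O(1)$ --- with growth exponent $\alpha_X=2$ and $\log$-exponent $\beta_X\in\{-1,-\tfrac12,0\}$ by \cref{main_thm_6} --- and with $Y$ the set of quadratic (and trivial) extensions of $K$ under $\Ht_Y=|N_{K/\Q}(\disc_K(\cdot))|$, so that $\#\{y:\Ht_Y(y)\le B\}\sim c_K B$, i.e.\ $\alpha_Y=1$ and $\beta_Y=0$. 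Since $\alpha_X=2>1=\alpha_Y$ and $\beta_Y\ge 0$, \cref{lem:product} gives $\mathcal{N}^6(W,K,B)\asymp B^2\log(B)^{\beta_X}$, reproducing \cref{main_thm_6} verbatim: the twist factor, having $\beta_Y=0$ and strictly smaller growth exponent, affects only the leading constant, not the power of $\log B$. In the cases where \cref{main_thm_6} supplies only an upper bound, the same computation with that upper bound for $X$ (and the genuine asymptotic for $Y$) gives the stated $\ll$; when $W=\{\id\}$ and $(-6,2)_K=-1$ there are no such curves at all, so $\mathcal{N}^6=0$; and in the sharp cases ($K=\Q$ or $W=\{\id\}$) the matching lower bound follows from the two-sided conclusion of \cref{lem:product}, or directly by partial summation from the lower bound in \cref{main_thm_6}. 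The step I expect to require the most care is the uniformity in $C$ of the inner twist count --- verifying that Wright's constant depends only on $K$ --- together with the edge cases (bounded $B/\overline{\Ht}(C)$, the trivial twist, and the finitely many stacky-point curves with $\overline{\Ht}=1$), all of which get absorbed into error terms or the finite exceptional set.
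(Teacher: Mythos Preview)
Your proposal is correct and follows essentially the same route as the paper: split off the finitely many curves with extra automorphisms (handled by \cref{proposition:Wright}), and for the generic $C_2$ locus combine \cref{main_thm_6} with Wright's quadratic-extension count via the Product Lemma \cref{lem:product} in the regime $\alpha_X=2>\alpha_Y=1$, $\beta_Y=0$. One small remark: for $D=6$ the only extra automorphism group that occurs is $C_2\times C_2$ (cf.\ \cite[Proposition~3.12]{BG08}), so your invocation of \cref{proposition:D4} is unnecessary here, though harmless.
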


\begin{proof}
    From \cite[Proposition 3.12]{BG08} there are finitely many curves corresponding to points of $C^6_W$ with geometric automorphism group $C_2\times C_2$, and outside of these exceptions the geometric automorphism group is $C_2$. From \cref{proposition:Wright} the contribution to $\cN^6(W,K,B)$ from curves with $\Aut_{\overline{K}}(C)\cong C_2\times C_2$ is $O(B)$. From \cref{proposition:Wright}, \cref{main_thm_6}, and \cref{lem:product}, the contribution from curves with $\Aut_{\overline{K}}(C)\cong C_2$ gives the bounds in the statement of the theorem.
\end{proof}

\subsection{The discriminant $10$ case}\label{10_sec}

We first describe the stacky geometry of the Atkin--Lehner quotients $\cX^{10}_{W}$. For each $W$ the stack $\cX^{10}_{W}$ has a generic $\mu_2$-gerbe. Rigidifying along this gerbe we get a stacky curve  $\cX^{10}_{W}\mathbin{\!\!\pmb{\fatslash}} \mu_2$. Let $v_i$ denote the number of $\mu_i$-stacky points of $\cX^{10}_{W}\mathbin{\!\!\pmb{\fatslash}} \mu_2$, and set $s_k\defeq \dim\left(H^0\left(\cX^{10}_{W},\Omega^{\otimes k}\right)\right)$. In this case we will only have $\mu_2$-stacky points and $\mu_3$-stacky points. As we are in the genus $0$ case, the dimension $s_k$ can be written in terms of $v_2$ and $v_3$ as follows for even weight $k > 2$ \cite[Theorem 2.24]{Sh_book}:
\begin{equation}\label{cusp_dimension_formula}
s_k=1-k + v_2 \left\lfloor \frac{k}{4} \right\rfloor + v_3 \left\lfloor \frac{k}{3} \right\rfloor + v_4 \left\lfloor \frac{3k}{8} \right\rfloor + v_6 \left\lfloor \frac{5k}{12} \right\rfloor,
\end{equation}
while $s_2 = 0$. The curve $\cX^{10}$ has $4$ points with CM by discriminant $-3$ and has no $-4$-CM points, as $\mathbb{Q}(i)$ does not split the quaternion algebra over $\Q$ of discriminant $D=10$. Each involution $w_m$ has $2$ fixed points: $w_2$ has two fixed points which are the $-8$-CM points on this curve, $w_5$ fixes the $-20$-CM points, and $w_{10}$ fixes the $-40$-CM points. We compute \cref{tab:stacky_points}, analogous to that given in \cite[\S 3.1]{BG08} in the $D=6$ case.

\begin{table}[h]
\centering
\begin{tabular}{ccccccc}
\toprule
$W$ & $v_2$ & $v_3$ & $s_{4}$ & $s_{6}$ & $s_{12}$ & $s_{18}$ \\
\midrule
$\{\id\}$ & $0$ & $4$ & $1$ & $3$ & $5$ & $7$\\
$\langle w_2\rangle$ & $2$ & $2$ & $1$ & $1$ & $3$ & $3$\\
$\langle w_5\rangle$ & $2$ & $2$ & $1$ & $1$ & $3$ & $3$\\
$\langle w_{10}\rangle$ & $2$ & $2$ & $1$ & $1$ & $3$ & $3$\\
$\AL(\cX^{10})$ & $3$ & $1$ & $1$ & $0$ & $2$ & $1$\\
\bottomrule
\end{tabular} 
\caption{Number $v_n$ of $\mu_n$-stacky points and dimension $s_m$ of weight $m$ modular forms on $\cX^{10}_W$. }
\label{tab:stacky_points}
\end{table}

From \cref{tab:stacky_points} we can fix a weight $4$ modular form $g_4$ and weight $6$ modular forms $g_6, h_6$ and $k_6$ such that the Atkin--Lehner involutions act as follows:
\begin{align*}
    & g_4=w_2(g_4)=w_5(g_4)=w_{10}(g_4)\\
    & g_6=w_2(g_6)=-w_5(g_6)=-w_{10}(g_6)\\
    & h_6=-w_2(h_6)=w_5(h_6)=-w_{10}(h_6)\\
    & k_6=-w_2(k_6)=-w_5(k_6)=w_{10}(k_6).
\end{align*}

As noted in \cite[\S 4]{BG08}, we may normalize these forms so that 
\[ 
g_6^2+2h_6^2+k_6^2=0 \qquad \text{and} \qquad  4g_4^3+27h_6^2+k_6^2=0.
\]
These forms generate the ring of modular forms corresponding to $\cX^{10}$. With these relations, and the observation that in this case
\[ 
s_{k+12} = s_k+4, 
\]
we compute the canonical ring for each Atkin--Lehner quotient:
\begin{align*}
    R_{\{\id\}}^{10}&\cong \Q[g_4,g_6,h_6,k_6]/(g_6^2+2h_6^2+k_6^2, 4g_4^3+27h_6^2+k_6^2)\\
    R^{10}_{\langle w_2\rangle}
    &\cong \Q[g_4, g_6,h_6k_6]/(32g_4^6-20g_4^3g_6^2+27g_6^4+625(h_6k_6)^2)\\
    &\cong   \Q[x,y,z]/(32x^6-116x^3y^2+27y^4+625z^2)\\
    R^{10}_{\langle w_5\rangle}
    &\cong \Q[g_4, h_6,g_6k_6]/(16g_4^6+208g_4^3h_6^2+675h_6^4-(g_6k_6)^2)\\
    &\cong \Q[x, y,z]/(16x^6+208x^3y^2+675y^4-z^2)\\
    R^{10}_{\langle w_{10}\rangle}
    &\cong \Q[g_4, k_6,g_6h_6]/(32g_4^6-92g_4^3k_6^2-25k_6^4+729(g_6h_6)^2)\\
    &\cong \Q[x, y,z]/(32x^6-92x^3y^2-25y^4+729z^2)\\
     R^{10}_{W}&\cong \Q[g_4, g_6^2,g_6h_6k_6]/(352g_4^6g_6^4-172g_4^3g_6^4+21g_6^6-(g_6h_6k_6)^2)\\
    &\cong \Q[x, y,z]/(32x^6y-116x^3y^2+27y^3+625z^2).
\end{align*}

Let $j$ denote the weight zero modular form $g_4^3/g_6^2$. From \cite[Proposition 4.2, Proposition 4.6]{BG08} $j$ induces an isomorphism $j:C^{10}\to\mathbb{P}^1-\{0,\infty\}$. Set
\begin{align*}
    J_2&\defeq 12j^2-16j+12,\\
    J_4&\defeq 6j^4-16j^3+6j^2-16j+6,\\
    J_6 &\defeq 4j^6-16j^5+32j^3-8j^2-16j+4,\\
    J_{10}&\defeq j^4.
\end{align*}
This defines a morphism 
\begin{align*}
\sJ: \mathbb{P}^1-\{0,\infty\} &\to \cP(1,2,3,5)\\
j &\mapsto [J_2 : J_4 : J_6 : J_{10}]. 
\end{align*}

Using \cite[Proposition 4.4]{BG08}, we compute the Mestre obstruction on each $C^{10}_W$ in the same way as in \cref{prop:Mestre_obstructions_6}.

\begin{prop}\label{prop:Mestre_obstructions_10}
  For each subgroup $W$ of the Atkin--Lehner group $\AL(\cX^{10})$, outside of finitely many $j$, the Mestre obstruction on $C^{10}_{W}$ can be described by the Hilbert symbol given in \cref{tab:Mestre_obstructions_10}.
\end{prop}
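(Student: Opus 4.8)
The plan is to mimic exactly the computation carried out in the proof of \cref{prop:Mestre_obstructions_6}, replacing the $D=6$ input with the analogous statement for $D=10$. The starting point is \cite[Proposition 4.4]{BG08}, which should express the Mestre obstruction for a curve $C$ corresponding to a point $j \in \mathbb{P}^1 - \{0,\infty\}$ of $C^{10}$ as a Hilbert symbol that is a rational function of $j$ (analogous to $(-6j,-2(27j+16))_K$ in the $D=6$ case), valid outside the finitely many $j$ for which the corresponding genus two curve has extra automorphisms. First I would record that expression and note that the excluded locus is finite and hence asymptotically negligible.

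Next, for each subgroup $W \leq \AL(\cX^{10})$, I would use the explicit presentation of the canonical ring $R^{10}_W$ computed just above — together with the defining relations $g_6^2 + 2h_6^2 + k_6^2 = 0$ and $4g_4^3 + 27h_6^2 + k_6^2 = 0$ — to rewrite $j = g_4^3/g_6^2$ in terms of the chosen generators $x,y,z$ of $R^{10}_W$. Then I would substitute this expression into the Hilbert symbol from \cite[Proposition 4.4]{BG08} and simplify using bilinearity of the Hilbert symbol, the fact that $(a,b)_K$ only depends on $a,b$ modulo squares, and the relation in $R^{10}_W$ (which lets one trade, e.g., a quantity of the form $\square \cdot z^2$ for a product of the other generators). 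This is the same three-move simplification used in the $D=6$ proof: clear denominators using that $j = (\text{stuff})/(\text{square})$, absorb squares, and use the canonical ring relation to recognize a perfect square. Carrying this out in each of the five cases $W \in \{\{\id\}, \langle w_2\rangle, \langle w_5\rangle, \langle w_{10}\rangle, \AL(\cX^{10})\}$ produces the entries of \cref{tab:Mestre_obstructions_10}.

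The main obstacle I anticipate is purely bookkeeping: tracking squares carefully through each Hilbert-symbol manipulation and correctly using the particular relation in each $R^{10}_W$ (these relations differ by constants and by which monomials appear, e.g.\@ $16x^6 + 208x^3y^2 + 675y^4 - z^2$ for $\langle w_5\rangle$ versus $32x^6 - 92x^3y^2 - 25y^4 + 729z^2$ for $\langle w_{10}\rangle$), so that the factor one extracts as $z^2$ is genuinely a square in the function field. There is also a minor subtlety about which finitely many $j$ to exclude — namely the CM points where the geometric automorphism group jumps, identifiable from the stacky points in \cref{tab:stacky_points} and from \cite[\S 4]{BG08} — but as in the $D=6$ case this does not affect the statement since we only claim the description holds outside finitely many $j$. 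No new ideas beyond those in \cref{prop:Mestre_obstructions_6} are required; the proof is "the other cases are similar," spelled out once more with the $D=10$ data.
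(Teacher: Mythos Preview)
Your proposal is correct and matches the paper's approach exactly: the paper's proof consists of the single sentence ``Using \cite[Proposition 4.4]{BG08}, we compute the Mestre obstruction on each $C^{10}_W$ in the same way as in \cref{prop:Mestre_obstructions_6},'' and the (commented-out) details carry out precisely the substitution-and-simplification routine you describe, starting from the symbol $(-10(1-4j),5(8j-27))_K$.
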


\begin{table}[h]
\centering
\renewcommand{\arraystretch}{1.2}
\begin{tabular}{cc}
\toprule
$W$ & Mestre obstruction\\
\midrule
$\{\id\}$ & $(-10,5)_K$ \\
$\langle w_2 \rangle$ & $(-10(y^2-4x^3),5(8x^3-27y^2))_K$\\
$\langle w_5 \rangle$ & $(-10(25y^2-4x^3), 5)_K$\\
$\langle w_{10}\rangle$ & $(-10, 15(8x^3-25y^2))_K$\\
$\AL(\cX^{10})$ & $ (-10(y-4x^3),5(8x^3-27y))_K$\\
\bottomrule
\end{tabular}
\caption{Mestre obstructions on $C_W^{10}$.}
\label{tab:Mestre_obstructions_10}
\end{table}

Letting $\pi:\mathcal{B}\to\mathbb{P}^1$ denote the conic bundle corresponding to the Mestre obstruction on $C^{10}_{W}$, we obtain a commutative diagram analogous to (\ref{diagram:commutative_triangle}). 


\begin{theorem}\label{main_thm_10}
We have that 
\[
\overline{\mathcal{N}}^{10}(W,K,B)\ll
\begin{cases}
   B/\log(B) &\text{ if } W\in\{\AL(\cX^{10}),\langle w_2\rangle\}, \\
    B/\log(B)^{1/2} &\text{ if } W\in\{\langle w_5\rangle, \langle w_{10}\rangle\},\\
    B &\text{ if } W=\{\textnormal{id}\} \text{ and } (-10,5)_K=1, \\
    0 &\text{ if } W=\{\textnormal{id}\} \text{ and } (-10,5)_K=-1.
\end{cases}
\]
These bounds are sharp when $K=\Q$ or $W=\{\id\}$.
\end{theorem}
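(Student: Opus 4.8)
The plan is to follow the proof of Theorem~\ref{main_thm_6} essentially verbatim; the only genuine differences are the value of the integer $\delta$ attached to $\sJ$ and the bookkeeping of reducible fibres of the Mestre conic bundles from Proposition~\ref{prop:Mestre_obstructions_10}. For each nontrivial $W\le\AL(\cX^{10})$ I would apply Proposition~\ref{prop:conic_bundle_count} with $\varphi=\sJ$ and with $\pi\colon\mathcal{B}\to\mathbb{P}^1_j$ the conic bundle corresponding to the Mestre obstruction on $C^{10}_W$. From the degrees $2,4,6$ of $J_2,J_4,J_6$ as polynomials in $j$ one reads off $\delta=2$ (they match $\delta w_i$ for $w=(1,2,3)$, and $J_{10}=j^4$ then homogenises to a form of degree $10=5\delta$); this is exactly what makes the main exponent drop from the $B^2$ of the $D=6$ case to $B^{2/\delta}=B$ here. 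As in the $D=6$ case, the finitely many $j$ at which the Hilbert-symbol description of the Mestre obstruction fails --- the $j$ with extra automorphisms together with the cusps $j=0,\infty$ --- account for only $O(1)$ isomorphism classes and are therefore harmless.

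The substance of the proof is the computation of $\Delta(\pi)$ for each $W$. Smooth fibres are geometrically irreducible, contributing $\delta_x(\pi)=1$, so it remains to locate the geometrically reducible fibres and sort out their type: a fibre consisting of two components transitively permuted by $\Gal(\overline{K}/K)$ contributes $\delta_x(\pi)=\tfrac{1}{2}$, whereas a double line contributes $\delta_x(\pi)=0$, as in the $\langle w_3\rangle$ case for $D=6$. Concretely I would rewrite each Hilbert symbol in Table~\ref{tab:Mestre_obstructions_10} as a pair $(a(j),b(j))$ of rational functions of $j$, using the canonical-ring relations to express the modular forms in terms of $j=g_4^3/g_6^2$, and then determine the ramification of the resulting quaternion class over $\mathbb{Q}(j)$ from the divisors of $a$ and $b$. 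Matching the exponents in the statement, I anticipate $\Delta(\pi)=1$ for $W\in\{\langle w_2\rangle,\AL(\cX^{10})\}$ and $\Delta(\pi)=\tfrac{1}{2}$ for $W\in\{\langle w_5\rangle,\langle w_{10}\rangle\}$ --- in the latter two cases one slot of the symbol is a nonzero constant ($5$, respectively $-10$), which should roughly halve the number of bad fibres. Feeding these values into Proposition~\ref{prop:conic_bundle_count} yields the four nontrivial upper bounds, and over $K=\mathbb{Q}$ the matching lower bounds follow from the Loughran--Matthiesen clause of that proposition, once one verifies its nonemptiness hypothesis by exhibiting a single genus $2$ curve defined over $\mathbb{Q}$ in each family $C^{10}_W$ (the Hashimoto--Murabayashi model handles $W=\AL(\cX^{10})$, and the explicit equations of Baba--Granath cover the smaller $W$).

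It remains to treat $W=\{\id\}$, where the Mestre obstruction is the constant symbol $(-10,5)_K$. If it is trivial then every curve parametrised by $C^{10}_{\{\id\}}(K)$ descends to $K$, the conic bundle is split so $\Delta(\pi)=0$, and Proposition~\ref{prop:conic_bundle_count} gives $\overline{\mathcal{N}}^{10}(\{\id\},K,B)\asymp B^{2/\delta}=B$; if it equals $-1$ then no such curve has a model over $K$ and the count is $0$, exactly as for $D=6$. The step I expect to be the main obstacle is the determination of $\Delta(\pi)$: one has to reliably distinguish double-line fibres from transitively-permuted pairs, to understand the fibres over $j=0$, $j=\infty$ and over the extra-automorphism values of $j$ where the tabulated Hilbert symbol is not valid, and to check the nonemptiness hypothesis of Proposition~\ref{prop:conic_bundle_count} needed for the sharpness claims.
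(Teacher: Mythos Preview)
Your proposal is correct and follows essentially the same approach as the paper: apply Proposition~\ref{prop:conic_bundle_count} with $\varphi=\sJ$ and $\delta=2$, read off the reducible fibres from the Mestre obstructions in Table~\ref{tab:Mestre_obstructions_10}, and treat $W=\{\id\}$ via the constant symbol $(-10,5)_K$. The only minor divergence is that the paper counts the reducible fibres directly from the table (two for $W\in\{\AL(\cX^{10}),\langle w_2\rangle\}$, one for $W\in\{\langle w_5\rangle,\langle w_{10}\rangle\}$) and notes that in every case they consist of two components permuted transitively by Galois---there are no double lines here, unlike the $\langle w_3\rangle$ case for $D=6$---so your anticipated obstacle of distinguishing fibre types does not arise.
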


\begin{proof}
      For $W\neq \{\textnormal{id}\}$ we apply \cref{prop:conic_bundle_count} with $\varphi=\sJ$. By the definition of $\sJ$ we see that $\delta=2$. We now compute $\Delta(\pi)$. Note that the smooth fibers of $\pi$ will only have a single irreducible component, and thus $\delta_x(\pi)=1$ for these fibers. It remains to study the geometrically reducible fibers. From \cref{prop:Mestre_obstructions_10}, we have that the conic bundle $\pi:\mathcal{B}\to \mathbb{P}^1_j$ has exactly two reducible fibers when $W\in\{\AL(\cX^{10}),\langle w_2\rangle\}$ and exactly one reducible fiber when $W\in\{\langle w_5\rangle, \langle w_{10}\rangle\}$. In each of these cases the reducible fiber $\pi^{-1}(x)_{\overline{K}}$ has two irreducible components and $\Gal(\overline{K}/K)$ acts transitively on these components, so that $\delta_x(\pi)=1/2$. It follows that 
    \[
    \Delta(\pi)=\begin{cases}
        1 \text{ if } W\in \{\AL(\cX^{10}),\langle w_2\rangle\},\\
        1/2 \text{ if } W\in\{\langle w_5\rangle, \langle w_{10}\rangle\}.
    \end{cases}
    \]
    Applying \cref{prop:conic_bundle_count} gives the desired result.

      When $W=\{\id\}$ we either have $(-10,5)_K=1$, in which case the Mestre obstruction vanishes and the desired result follows from \cref{prop:conic_bundle_count}, or we have $(-10,5)_K=-1$, in which case $C^{10}_{\{\id\}}(K)=\emptyset$. 
\end{proof}

We now count $K$-isomorphism classes.

\begin{theorem}\label{main_thm_10_K}
We have that 
\[
\mathcal{N}^{10}(W,K,B)\ll
\begin{cases}
   B &\text{ if } W\in\{\AL(\cX^{10}),\langle w_2\rangle\}, \\
    B\log(B)^{1/2} &\text{ if } W\in\{\langle w_5\rangle, \langle w_{10}\rangle\},\\
    B\log(B) &\text{ if } W=\{\textnormal{id}\} \text{ and } (-10,5)_K=1, \\
    0 &\text{ if } W=\{\textnormal{id}\} \text{ and } (-10,5)_K=-1.
\end{cases}
\]
\end{theorem}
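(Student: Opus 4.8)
The plan is to mirror the proof of \cref{main_thm_6_K} exactly, transporting the bounds of \cref{main_thm_10} (on $\overline{K}$-isomorphism classes) into bounds on $K$-isomorphism classes via the Product Lemma (\cref{lem:product}) applied to the fibers of $\Ht$ over $\overline{\Ht}$. First I would recall that, by the definition of $\Ht$ in \eqref{eq:height}, enumerating the $K$-isomorphism classes in a fixed twist class amounts to enumerating field extensions $K_C/K$ with $\Gal \cong \Aut_{\overline{K}}(C)$ by discriminant; thus the count of $K$-classes of height $\leq B$ is a ``product-set'' count between the $\overline{K}$-classes (weighted by $\overline{\Ht}$) and these extensions (weighted by $|N_{K/\Q}(\disc_K(\cdot))|$).

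Next I would identify the relevant geometric automorphism groups arising on $C^{10}_W$. As in the $D=6$ case, I expect from the Baba--Granath description (the analogue of \cite[Proposition 3.12]{BG08}, here via \cite[\S 4]{BG08}) that outside finitely many $j$ the geometric automorphism group is $C_2$, with at most finitely many exceptional curves having $\Aut_{\overline{K}}(C)\cong C_2\times C_2$ (and none with larger groups). By \cref{proposition:Wright}, each $C_2\times C_2$ exception contributes $O(B^{1/2}\log(B)^2)=O(B)$ to $\mathcal{N}^{10}(W,K,B)$, which is absorbed into all the stated bounds. For the generic $C_2$ locus, \cref{proposition:Wright} gives that each twist class of height $\leq B'$ contains $\asymp B'$ curves of height $\leq B$, so I would invoke \cref{lem:product} with the $Y$-side being the $\overline{K}$-classes — with exponents $(\alpha_Y,\beta_Y)$ read off from \cref{main_thm_10}, i.e. $(1,-1)$ for $W\in\{\langle w_2\rangle,\AL(\cX^{10})\}$, $(1,-1/2)$ for $W\in\{\langle w_5\rangle,\langle w_{10}\rangle\}$, $(1,0)$ for $W=\{\id\}$ with $(-10,5)_K=1$, and the empty count otherwise — and the $X$-side being the twist-class count with $(\alpha_X,\beta_X)=(1,0)$. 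Since $\alpha_X=\alpha_Y=1$ in every case, the Product Lemma outputs $B\log(B)^{\beta_X+\beta_Y+1}=B\log(B)^{\beta_Y+1}$, which yields exactly $B$, $B\log(B)^{1/2}$, $B\log(B)$, and $0$ respectively, matching the statement.

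One technical point to handle carefully: \cref{main_thm_10} gives only an upper bound ($\ll$) on $\overline{\mathcal{N}}^{10}$ in general (it is an equality only for $K=\Q$ or $W=\{\id\}$), whereas \cref{lem:product} is stated for asymptotics ($\sim$). I would address this by running the Product Lemma argument with the upper bound in place of the asymptotic — the summation-by-parts / Abel-summation estimates in the proof of \cref{lem:product} go through verbatim with $\ll$ replacing $\sim$ on the $Y$-side, since only upper bounds on the partial sums $\sum_{j\leq k} a(j)$ are used to derive an upper bound on the product count — exactly as is implicitly done in the proof of \cref{main_thm_6_K}. The main obstacle is thus bookkeeping rather than new ideas: one must confirm that the $\overline{K}$-automorphism group is generically $C_2$ on each $C^{10}_W$ and that the finitely many non-$C_2$ exceptions contribute within the claimed error, and then simply feed the four pairs of exponents into \cref{lem:product}. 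Note also that, consistent with \cref{main_thm_10}, no sharpness claim is made here except implicitly through \cref{main_thm}; so only the upper bounds need to be established.
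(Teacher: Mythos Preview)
Your approach mirrors the paper's, but there is one genuine oversight: for $D=10$ the exceptional geometric automorphism groups are not limited to $C_2\times C_2$. By \cite[Proposition~4.11]{BG08} there are also finitely many $j$ on $C^{10}_W$ whose curves have $\Aut_{\overline{K}}(C)\cong D_4$, unlike the $D=6$ situation. The paper therefore invokes \cref{proposition:D4} (the $O(B^{1/2})$ bound on counting $D_4$-octic extensions) in addition to \cref{proposition:Wright}; your proposal appeals only to \cref{proposition:Wright} and asserts ``none with larger groups'', which is incorrect here. The fix is routine---the $D_4$ contribution is $o(B)$ and is absorbed into every stated bound---but it is a step you must include.

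A minor technical point: in your invocation of \cref{lem:product} you place the $\overline{K}$-class count on the $Y$-side with $\beta_Y\in\{-1,-1/2,0\}$, but the hypothesis of that lemma requires $\beta_Y\geq 0$; it is $\beta_X$ that is permitted to be negative (see the remark following \cref{lem:product}). Swap the roles so that $X$ carries the $\overline{K}$-class count and $Y$ the twist count (with $\beta_Y=0$). Since the output $B^{\alpha_X}\log(B)^{\beta_X+\beta_Y+1}$ is symmetric in $\beta_X,\beta_Y$, your numerical conclusions are unaffected.
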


\begin{proof}
 
    From \cite[Proposition 4.11]{BG08} there are finitely many curves corresponding to points of $C^{10}_W$ with geometric automorphism group $C_2\times C_2$ or $D_4$, and outside of these exceptions the automorphism group is $C_2$. From \cref{proposition:Wright} the contribution to $\cN^{10}(W,K,B)$ from curves with $\Aut_{\overline{K}}(C)\cong C_2\times C_2$ is $o(B)$. From \cref{proposition:D4} the contribution from curves with $\Aut_{\overline{K}}(C)\cong D_4$ is also $o(B)$. From \cref{proposition:Wright}, \cref{main_thm_6}, and \cref{lem:product}, the contribution from curves with $\Aut_{\overline{K}}(C)\cong C_2$ gives the bounds in the statement of the theorem.
\end{proof}

\subsection{The discriminant $22$ case}\label{22_sec}



 Assume the same setup and notation as in the $D=10$ case. The curve $\cX^{22}$ has $2$ geometric points with CM by discriminant $-4$ and $4$ geometric points with CM by discriminant $-3$. As $2 \mid 22$, we will encounter $\mu_4$-stacky points on the quotient by $\langle w_2 \rangle$ and on the full quotient. Each involution $w_m$ has $2$ fixed points: $w_2$ fixes the two $-4$-CM points, and $w_{11}$ and $w_{22}$ fix the $-11$-CM and $-88$-CM points, respectively, on $\cX^{22}$. 

\begin{table}[h]
\centering
\begin{tabular}{cccccccccc}
\toprule
$W$ & $v_2$ & $v_3$ & $v_4$ & $s_{4}$ & $s_{6}$ & $s_8$ & $s_{10}$ & $s_{12}$ & $s_{18}$ \\
\midrule
$\{\text{id}\}$ & $2$ & $4$ & $0$ & $3$ & $5$ & $5$ & $7$ & $11$ & $15$\\
$\langle w_2\rangle$ & $0$ & $2$ & $2$ & $1$ & $3$ & $3$ & $3$ & $5$ & $7$\\
$\langle w_{11}\rangle$ & $3$ & $2$ & $0$ & $2$ & $2$ & $3$ & $3$ & $6$ & $7$ \\
$\langle w_{22}\rangle$ & $3$ & $2$ & $0$ & $2$ & $2$ & $3$ & $3$ & $6$ & $7$ \\
$\AL(\cX^{22})$ & $2$ & $1$ & $1$ & $1$ & $1$ & $2$ & $1$ & $3$ & $3$ \\
\bottomrule
\end{tabular} 
\caption{Number $v_n$ of $\mu_n$-stacky points and dimension $s_m$ of weight $m$ modular forms on $\cX^{22}_W$. }
\label{tab:stacky_points_22}
\end{table}

\cref{tab:stacky_points_22} displays counts of stacky points and dimensions of spaces of modular forms on each quotient in even weights up to $18$.

From this information, we fix weight $4$ forms $f_{4,i}$ for $i \in \{1,2,4\}$ generating the space of weight $4$ forms on $\cX^{22}$ and weight $6$ forms $f_{6,j}$ for $j \in \{1,2,3,4,5\}$ generating the space of weight $6$ forms on $\cX^{22}$ such that
\begin{align*}
    f_{4,1} &= w_2(f_{4,1}) = w_{11}(f_{4,1}) = w_{22}(f_{4,1}), \\
    f_{4,2} &= -w_2(f_{4,2}) = w_{11}(f_{4,2}) = -w_{22}(f_{4,2}), \\
    f_{4,3} &= -w_2(f_{4,3}) = -w_{11}(f_{4,3}) = w_{22}(f_{4,3}), \\
    f_{6,1} &= w_2(f_{6,1}) = -w_{11}(f_{6,1}) = -w_{22}(f_{6,1}), \\
    f_{6,2} &= w_2(f_{6,2}) = -w_{11}(f_{6,2}) = -w_{22}(f_{6,2}), \\
    f_{6,3} &= -w_2(f_{6,3}) = w_{11}(f_{6,3}) = -w_{22}(f_{6,3}) ,\\
    f_{6,4} &= w_2(f_{6,4}) = w_{11}(f_{6,4}) = w_{22}(f_{6,4}) , \text{ and }\\
    f_{6,5} &= -w_2(f_{6,5}) = -w_{11}(f_{6,5}) = w_{22}(f_{6,5}).
\end{align*}

As in the $D=10$ case, it follows from the dimension formula \cref{cusp_dimension_formula} that the full ring of modular forms corresponding to $\cX^{22}$ is generated by the weight $4$ and weight $6$ forms $g_4,h_4,k_4,g_6,h_6,j_6,k_6,$ and $\ell_6$. Moreover, in this case we have
\[ 
s_{k+12} = s_{k} + 10.
\]
The dimensions of these spaces of forms make it difficult to determine all of the relations without further information, compared to the $D=6,10$ cases. Fortunately, these are worked out in thesis work of Joan Nualart Riera \cite{NR15} using explicit computations of expansions of automorphic forms around CM points. Our notation for the generators has been chosen to match that in \cite{NR15}, and the relations are determined in \cite[Proposition 6.17]{NR15} to be as follows:
\begin{alignat*}{3}
    0 &= r_1 \defeq 11 f_{4,3}^2+f_{4,1}^2+11f_{4,2}^2  , \qquad  &&0 = r_{13} \defeq 176f_{4,1}^2f_{4,2} - 121f_{6,3}f_{6,4} - 27f_{6,2}f_{6,5}   , \\
    0 &= r_2 \defeq -f_{4,2}f_{6,2} + f_{4,1}f_{6,5}  , \qquad &&0 = r_{14} \defeq -f_{6,5}^2+f_{6,1}f_{6,2}  , \\
    0 &= r_3 \defeq -f_{4,2}f_{6,5} + f_{4,1}f_{6,1}  , \qquad &&0 = r_{15} \defeq -f_{6,4}f_{6,5} + f_{6,1}f_{6,3}  , \\
    0 &= r_4 \defeq f_{4,2}f_{6,4} - f_{4,3}f_{6,1}  , \qquad &&0 = r_{16} \defeq f_{6,2}f_{6,4} - f_{6,3}f_{6,5}  , \\
    0 &= r_5 \defeq f_{4,1}f_{6,4} - f_{4,3}f_{6,5}  , \qquad &&0 = r_{17} \defeq -11f_{6,3}^2 + 121f_{6,4}^2 - f_{6,2}^2 + 121f_{6,1}^2  , \\
    0 &= r_6 \defeq -f_{4,2}f_{6,3} + f_{4,1}f_{6,4}  , \qquad &&0 = r_{18} \defeq  11f_{6,3}f_{6,4} + f_{6,2}f_{6,5} + 11f_{6,1}f_{6,5}  , \\
    0 &= r_7 \defeq f_{4,1}f_{6,3} - f_{4,3}f_{6,2}  , \qquad &&0 = r_{19} \defeq 11f_{6,3}^2+f_{6,2}^2+11f_{6,5}^2  , \\
    0 &= r_8 \defeq 11f_{4,3}f_{6,4} + f_{4,1}f_{6,5} + 11f_{4,2}f_{6,1}  , \qquad &&0 = r_{20} \defeq 176f_{4,1}^3 - 121f_{6,3}^2 - 27f_{6,2}^2  , \\
    0 &= r_9 \defeq  11f_{4,3}f_{6,3}+f_{4,1}f_{6,2}+11f_{4,2}f_{6,5} , \\
    0 &= r_{10} \defeq 1936f_{4,1}f_{4,2}^2 + 297f_{6,3}^2 - 1331f_{6,4}^2+27f_{6,2}^2  , \\
    0 &= r_{11} \defeq 176f_{4,1}f_{4,2}f_{4,3} - 16f_{6,2}f_{6,4}+121f_{6,1}f_{6,4}  , \\ 
    0 &= r_{12} \defeq 176f_{4,1}^2f_{4,3}-16f_{6,2}f_{6,3}+121f_{6,1}f_{6,3} . 
\end{alignat*}

Let $\mathcal{R} = \{r_i \mid 1 \leq i \leq 20\}$ denote the complete set of $20$ relations among the generators. The canonical rings for each quotient are then  
 
\begin{align*} 
R_{\{\id\}}^{22} &\cong \Q[f_{4,1},f_{4,2},f_{4,3},f_{6,1},f_{6,2},f_{6,3},f_{6,4},f_{6,5}]/\mathcal{R} \\
R_{\langle w_2 \rangle}^{22} &\cong \Q[f_{4,1},f_{4,2}^2,f_{4,2}f_{4,3},f_{6,1},f_{6,2},f_{6,4}]/\mathcal{R}_{\langle w_2 \rangle} \\
R_{\langle w_{11} \rangle}^{22} &\cong \Q[f_{4,1},f_{4,2},f_{6,3},f_{6,4},f_{6,1}^2]/\mathcal{R}_{\langle w_{11} \rangle}\\
R_{\langle w_{22} \rangle}^{22} &\cong \Q[f_{4,1},f_{4,3},f_{6,4},f_{6,5}]/\mathcal{R}_{\langle w_{22} \rangle} \\
R_{W}^{22} &\cong \Q[f_1^2,f_2^2,f_{6,4},f_{4,2}f_{4,3}f_{6,1},f_{4,2}f_{4,3}f_{6,2},f_{4,2}f_{6,1}f_{6,5},f_{4,2}f_{6,2}f_{6,5},f_{4,3}f_{6,1}f_{6,3}, \\
& \qquad \qquad f_{4,3}f_{6,2}f_{6,3},f_{6,1}f_{6,3}f_{6,5},f_{6,2}f_{6,3}f_{6,5}]/\mathcal{R}_{\AL(\cX^{22})}, 
\end{align*}
where, for each non-trivial subgroup $W \leq \AL(\cX^{22})$, we write $\mathcal{R}_W$ for the ideal of relations in $R$ fixed by $W$. (While identifying explicit generators for the canonical ring of each quotient will be useful in what follows, writing down a full set of generators for each $\mathcal{R}_W$ appears to be computationally difficult and will not be necessary.)

For a wide range of indefinite quaternion discriminants $D$, Igusa invariants and the Mestre obstruction on $\cX^{D}$ are worked out explicitly in \cite{LY20} in terms of a specified Hauptmodul on the full quotient. In terms of a specific hauptmodul $j$ on $\cX^{22}/\AL(\cX^{22})$, Igusa invariants for $C^{22}$ can be derived from \cite{LY20}; they are
\begin{align*}
    J_2 &\defeq 2^2 (3 j^{5} + 5 j^{4} + 23 j^{3} + 10 j^{2} + 4 j + 3), \\
    J_4 &\defeq 2 (3j^{10} + 7j^{9} + 27j^{8} + 57j^{7} + 251j^{6} + 261j^{5} + 120j^{4} + 27j^{3} + 4j^{2} + 8j + 3)
, \\
    J_6 &\defeq  -2^{2} \cdot (j^{5} - 3j^{4} - 3j^{3} - 6j^{2} - 4j - 1)\\
    &\qquad \cdot (j^{10} + 5j^{9} - 9j^{8} - 69j^{7} - 87j^{6} - 89j^{5} - 36j^{4} + 17j^{3} + 12j^{2} - 1)
, \\
 J_{10} &\defeq -(j - 1)^{4} \cdot j^{6} \cdot (j + 1)^{12}. 
\end{align*}
The Igusa invariants induce a morphism 
\begin{align*}\label{eq:J22}
\sJ: \mathbb{P}^1-\{0,\infty\} &\to \cP(1,2,3,5)\\
j &\mapsto [J_2 : J_4 : J_6 : J_{10}]. 
\end{align*}

The Mestre obstruction on $\cX^{22}$ is given (in \cite[Table 4]{LY20}) by the symbol 
\[ 
\left( -22j, -11(16j + 11) \right).
\]

To make use of the explicit computation of the Mestre obstruction, as well as the above explicit computation of canonical rings on each quotient, we will relate this $j$ from \cite{LY20} to the Hauptmodul $t_{22}^+$ on the full quotient defined in \cite[Theorem 2.4]{NR15}. 

\begin{lemma}\label{Lemma: hauptmodul_relation}
 The Hauptmoduln $j$ and $t_{22}^+$ satisfy the relation
 \[ j = \dfrac{11}{16(t_{22}^+-1)} . \]
\end{lemma}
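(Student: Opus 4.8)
The plan is to pin down both $j$ and $t_{22}^+$ as explicit rational functions of a common parameter and then read off the M\"obius transformation between them. Since both are Hauptmoduln on the genus zero curve $\cX^{22}/\AL(\cX^{22}) \cong \mathbb{P}^1_\Q$, they must be related by a fractional linear transformation $j = (a t_{22}^+ + b)/(c t_{22}^+ + d)$ with coefficients in $\Q$; so the task reduces to determining the four coefficients up to scaling, i.e.\ three conditions. The cleanest way to get these is to match the ramification/CM data: the special points of the full quotient (the images of the CM points of discriminants $-3,-4,-11,-88$ and the fixed points of the Atkin--Lehner involutions) together with the cusp-like behavior of the Igusa invariants. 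First I would identify, from \cite{LY20}, the values of $j$ at the distinguished points — in particular where $J_{10}$ vanishes ($j \in \{0,1,-1\}$, corresponding to the loci where the genus two curve degenerates or acquires extra structure) and where the Mestre obstruction symbol $(-22j,-11(16j+11))$ degenerates (at $j=0$ and $j=-11/16$). Correspondingly, from \cite[Theorem 2.4]{NR15} and the surrounding discussion, I would locate the matching special values of $t_{22}^+$ at the same geometric points.

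The key steps, in order: (1) List the geometric special points of $X^{22}/\AL(\cX^{22})$ — the elliptic/stacky points recorded in \cref{tab:stacky_points_22} (one $\mu_4$-point, one $\mu_3$-point, two $\mu_2$-points) and the points of bad reduction for the family $C^{22}$ — and record the value of $j$ at each using the explicit $J_i$ above. (2) Using Nualart Riera's normalization of $t_{22}^+$ in \cite[Theorem 2.4]{NR15}, record the value of $t_{22}^+$ at each of the same points. (3) Solve the resulting linear system for the M\"obius coefficients; three matched point-pairs over-determine the transformation, so the fourth serves as a consistency check. (4) Verify the candidate relation $j = 11/(16(t_{22}^+-1))$ globally by checking it transports the full set of special values correctly and is compatible with the two sets of canonical ring computations (the generators of $R^{22}_{\AL(\cX^{22})}$ expressed via $t_{22}^+$ versus the Igusa/Mestre data expressed via $j$) — in particular that the pole of $j$ at $t_{22}^+ = 1$ matches a point where $J_{10}$ or $J_2$ forces $j \to \infty$, and that $j=0$ (a branch point of the Mestre conic bundle) corresponds to $t_{22}^+ = \infty$.

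\emph{Alternative, more computational route:} express both Hauptmoduln as explicit ratios of modular forms — $j$ is a weight-zero ratio built from the $f_{4,i},f_{6,j}$ via \cite{LY20}, and $t_{22}^+$ is likewise a ratio of forms in $R^{22}_{\AL(\cX^{22})}$ from \cite{NR15} — then simplify the quotient $j/t_{22}^+$ using the relations $\mathcal{R}_{\AL(\cX^{22})}$ until it collapses to the claimed linear-fractional expression. This avoids any appeal to transcendental CM-point expansions but requires carrying the (somewhat unwieldy) relations among the weight $4$ and $6$ forms.

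\emph{Main obstacle.} The hard part will be step (2): extracting from \cite{NR15} the precise values that $t_{22}^+$ takes at the distinguished points in a normalization consistent with \cite{LY20}'s choice of $j$. The two sources fix their Hauptmoduln by different conventions, so a naive comparison of "the $-11$-CM point" or "the cusp" may be off by exactly the M\"obius ambiguity we are trying to resolve; care is needed to ensure the point-labels are genuinely identified (e.g.\ via the Atkin--Lehner eigenvalue patterns in the displayed action on the $f_{4,i}, f_{6,j}$, which should match Nualart Riera's own sign conventions). Once the dictionary between special points is correct, the remaining algebra — solving a $2\times 2$ linear system and clearing denominators — is routine, and the factor $11/16$ should emerge directly from the appearance of $16j+11$ in the Mestre symbol together with the normalization $t_{22}^+ = 1$ at the point where $j$ has its pole.
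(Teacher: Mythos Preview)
Your proposal is correct and follows essentially the same strategy as the paper: observe that two Hauptmoduln on a genus zero curve differ by a M\"obius transformation, then pin down that transformation by matching values at three distinguished points. The paper's execution is simply more direct than you anticipate: rather than extracting the special values of $j$ from the zeros of $J_{10}$ or the degeneration of the Mestre symbol, it cites the three CM values $(j,t_{22}^+) \in \{(\infty,1),(1,27/16),(0,\infty)\}$ at the $-4$-, $-3$-, and $-11$-CM points straight from \cite[Table~2]{LY20} and \cite[Theorem~3.1]{NR15}, and then solves. Your ``main obstacle'' therefore dissolves, since both sources tabulate their Hauptmodul values at the same intrinsically labeled CM points, so no delicate matching of conventions is needed; your alternative computational route and global verification step are sound but unnecessary.
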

\begin{proof}
    Each of $j$ and $t_{22}^+$ is a Hauptmodul on $\cX^{22}/\AL(\cX^{22})$, so there are $a,b,c,d$ with 
    \[ t_{22}^+ = \dfrac{aj + b}{cj + d}. \]
    Knowing the values of $t$ and $j$ at three distinct points will then determine the relationship. Specifically, we use values at the three CM points that specify $j$ in \cite[Table 2]{LY20}. The values of $t_{22}^+$ at these same CM points are given in \cite[Theorem 3.1]{NR15}, and we copy the values of each in \cref{tab:CM_values_non-genus_2}.
\end{proof}

\begin{table}[h]
\centering
\renewcommand{\arraystretch}{1.6}
\begin{tabular}{ccc}
\toprule
CM order & $t_{22}^+$ & $j$\\
\midrule
$\Z\left[\sqrt{-1}\ \right]$ & $1$ & $\infty$ \\
$\Z\left[\frac{1+\sqrt{-3}}{2}\right]$ & $27/16$ & $1$ \\
$\Z\left[\frac{1+\sqrt{-11}}{2}\right]$ & $\infty$ & $0$ \\
\bottomrule 
\end{tabular}
\caption{Values of Hauptmoduln at some CM points of $\cX^{22}$.}
\label{tab:CM_values_non-genus_2}
\end{table}

\begin{prop}\label{proposition:22_extra_auts}
    Let $\alpha$ be such that
    \begin{equation*}
        121+1210\alpha+2377\alpha^2=0,
    \end{equation*}
    let $\beta$ be such that
    \begin{equation*}
        121+1331\beta+3664\beta^2,
    \end{equation*}
    and let $\gamma$ be such that
    \begin{equation*}
        1331+37389\gamma+296340\gamma^2 + 732304\gamma^3.
    \end{equation*}
    Then \cref{tab:CM_values_genus_2} lists all points of $C_W^{22}$ corresponding to genus two curves whose geometric automorphism group $\Aut_{\overline{K}}(C)$ is not isomorphic to the cyclic group of order two $C_2$.
\end{prop}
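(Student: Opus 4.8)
The plan is to locate the finitely many points of $C^{22}$ carrying extra automorphisms directly from the explicit Igusa invariants $[J_2:J_4:J_6:J_{10}]$ as functions of the Hauptmodul $j$, to determine the automorphism group at each such point, and then to transport the answer to each quotient $C^{22}_W$ via \cref{Lemma: hauptmodul_relation} together with the explicit generators of the canonical rings $R^{22}_W$ recorded above.

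First I would recall the classical description of the automorphism strata of $\cM_2$. A genus two curve $C/\overline{K}$ has $\Aut_{\overline{K}}(C)\supsetneq C_2$ exactly when either it admits a non-hyperelliptic involution---a condition cut out by the vanishing of an explicit weighted-homogeneous polynomial in the Igusa (equivalently Igusa--Clebsch) invariants, accounting for the strata $C_2\times C_2$, $D_4$, $D_6$ and $\GL_2(\bbF_3)$---or it is one of the two sporadic curves with $\Aut\cong C_{10}$ or $\Aut\cong\SL_2(\bbF_3)$, whose Igusa invariants are known explicitly (see, e.g., the classical work of Bolza and Igusa, or Cardona--Quer). Substituting the given polynomials $J_i(j)$ into these conditions produces a polynomial $P(j)$ whose roots, after discarding the roots of $J_{10}=-(j-1)^4 j^6(j+1)^{12}$ (at which the fiber is not a smooth genus two curve), are precisely the relevant $j$-values; factoring $P$ over $\Q$ should yield the linear factors giving the rational points in the statement, together with the quadratics $121+1210\alpha+2377\alpha^2$ and $121+1331\beta+3664\beta^2$ and the cubic $1331+37389\gamma+296340\gamma^2+732304\gamma^3$.

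For each $j$-value so obtained I would then pin down the precise automorphism group by testing the finer invariant conditions defining the $D_4$ and $D_6$ loci (and checking that no fiber coincides with one of the two sporadic curves), thereby confirming that only $C_2$, $C_2\times C_2$ and $D_4$ occur. As a structural cross-check: an abelian surface with QM by the quaternion algebra of discriminant $22$ whose Jacobian-curve carries an extra automorphism must be isogenous to $E\times E$ with $E$ of CM type by $\Q(i)$ or $\Q(\sqrt{-3})$, so every point found must lie among the two $-4$-CM and four $-3$-CM points of $\cX^{22}$ (or, on a quotient, an Atkin--Lehner orbit of these). Comparing with the CM values recorded in \cref{tab:CM_values_non-genus_2} both certifies completeness of the list and helps separate the $C_2\times C_2$ points from the $D_4$ points. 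Finally, the residual group $\AL(\cX^{22})/W$ acts on the $j$-line via \cref{Lemma: hauptmodul_relation} and the action of the $w_p$ on the forms $f_{4,i},f_{6,j}$; pushing the points of the previous step down to $C^{22}_W$ and reading off coordinates in terms of the listed generators of $R^{22}_W$ fills in \cref{tab:CM_values_genus_2}.

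The main obstacle is the bookkeeping in the last two steps rather than any conceptual difficulty: $P(j)$ has sizeable degree, so one must factor it carefully, correctly match each root---and each residual Atkin--Lehner orbit of roots---to the right automorphism group and the right point on each $C^{22}_W$, and in particular verify scrupulously that none of the sporadic strata of $\cM_2$ is met, so that $C_2$, $C_2\times C_2$ and $D_4$ are genuinely the only geometric automorphism groups that appear.
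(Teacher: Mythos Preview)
Your computational strategy---substitute the explicit $J_i(j)$ into the Bolza--Igusa polynomial cutting out the $C_2\times C_2$ locus, factor, discard the roots of $J_{10}$, then refine via the $D_4$ and sporadic conditions---is sound and would reproduce the table. The paper proceeds quite differently: it imports the list of special points wholesale from Nualart Riera's thesis \cite[Proposition~3.16]{NR15}, converts the $t_{22}^+$-coordinates recorded there to $j$-coordinates via \cref{Lemma: hauptmodul_relation} and \cite[Theorem~3.10]{NR15}, and only then computes Igusa invariants to read off the automorphism groups. Your route is self-contained but involves factoring a polynomial of sizeable degree; the paper's route is shorter but relies on a nontrivial external computation.

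Your ``structural cross-check,'' however, is based on a false identification. You assert that a QM abelian surface whose associated genus~$2$ curve has an extra involution must have CM by $\Q(i)$ or $\Q(\sqrt{-3})$, and hence that every such point lies among the $-4$- and $-3$-CM points of $\cX^{22}$. This conflates two different notions of ``extra automorphism'': the stacky points of $\cX^{22}$ (where the \emph{QM triple} $(A,\lambda,\iota)$ has nontrivial automorphisms---these are indeed the $-3$- and $-4$-CM points) versus the points where the \emph{polarized surface} $(A,\lambda)$, equivalently the curve $C$, has extra automorphisms. The latter is a strictly weaker condition and occurs at many more CM points. Indeed, \cref{tab:CM_values_genus_2} exhibits CM by orders in $\Q(\sqrt{-22})$, $\Q(\sqrt{-14})$, and $\Q(\sqrt{-59})$, while the $-3$- and $-4$-CM points of \cref{tab:CM_values_non-genus_2} sit precisely at $j\in\{1,\infty\}$, i.e.\ among the values you correctly discard because $J_{10}$ vanishes there. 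So the cross-check would mislead rather than confirm. Relatedly, your final ``push down to $C^{22}_W$'' step is unnecessary: $j$ is already a Hauptmodul on the \emph{full} Atkin--Lehner quotient, so $\AL(\cX^{22})/W$ does not act on the $j$-line and the table is the same for every $W$.
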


\begin{proof}
    The curves with extra automorphism are given in \cite[Proposition 3.16]{NR15}. The corresponding values of $j$ can be determined from \cite[Proposition 3.16]{NR15}, \cite[Theorem 3.10]{NR15}, and \cref{Lemma: hauptmodul_relation}. The geometric automorphism groups are then found by computing the corresponding Igusa invariants.
\end{proof}

\begin{table}[h]
\centering
\renewcommand{\arraystretch}{1.5}
\begin{tabular}{ccc}
\toprule
CM order & $j$ & $\Aut_{\overline{K}}(C)$ \\
\midrule
$\Z\left[3\sqrt{-1}\ \right]$ & $1/3$ & $D_4$ \\
$\Z\left[\frac{1+3\sqrt{-3}}{2}\right]$ & $4$ & $C_2\times C_2$ \\
$\Z\left[\sqrt{-22}\ \right]$ & $-11/16$ & $C_2\times C_2$ \\
$\Z\left[\sqrt{-14}\ \right]$ & $\frac{2377\alpha + 506}{99}$ & $C_2\times C_2$ \\
$\Z\left[\frac{1+5\sqrt{-3}}{2}\right]$ & $\frac{3664\beta + 627}{99}$ & $C_2\times C_2$ \\
$\Z\left[\frac{1+\sqrt{-59}}{2}\right]$ & $\frac{732304\gamma^2 + 296340\gamma + 29645}{1089}$ & $C_2\times C_2$ \\
\bottomrule 
\end{tabular}
\caption{Points of $\cC^{22}$ with extra automorphisms.}
\label{tab:CM_values_genus_2}
\end{table}

Using the definitions given in \cite[Proposition 6.17]{NR15} for the generators of the rings of modular forms in terms of $t_{22}^+$, we find that 
\[ t_{22}^+ = -\dfrac{f_{6,4}^2}{f_{6,1}^2}.\]
Therefore, from \cref{Lemma: hauptmodul_relation}, we have
\begin{equation}\label{Equation: j_to_forms}
j = -\dfrac{11f_{6,1}^2}{16(f_{6,1}^2+f_{6,4}^2)}.
\end{equation} 

We next compute the Mestre obstructions on each quotient. 

\begin{prop}\label{prop:Mestre_obstructions_22}
  For each subgroup $W$ of the Atkin--Lehner group $\AL(\cX^{22})$, outside of finitely many $j$, the Mestre obstruction on $C^{22}_{W}$ can be described by the Hilbert symbol given in \cref{tab:Mestre_obstructions_22}.
\end{prop}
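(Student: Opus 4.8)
The plan is to mirror exactly the strategy used in \cref{prop:Mestre_obstructions_6} and \cref{prop:Mestre_obstructions_10}. We start from the Mestre obstruction on $\cX^{22}$ recorded from \cite[Table 4]{LY20}, namely the Hilbert symbol $\left(-22j, -11(16j+11)\right)_K$, valid away from the finitely many $j$ listed in \cref{tab:CM_values_genus_2} where the corresponding genus two curve has extra automorphisms. For each subgroup $W \leq \AL(\cX^{22})$ the relevant Hauptmodul or ratio of modular forms is already identified: on the full quotient we have the relation (\ref{Equation: j_to_forms}), and on the intermediate quotients $j$ must first be rewritten in terms of the explicit generators of $R^{22}_W$ listed above (using \cref{Lemma: hauptmodul_relation} and the definitions of the $f_{4,i}$, $f_{6,j}$ from \cite[Proposition 6.17]{NR15}). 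The key point is that $j$ and $16j+11 = -\frac{11 f_{6,4}^2}{16(f_{6,1}^2+f_{6,4}^2)} \cdot (\text{stuff})$ can each be expressed, after clearing denominators (which only changes a Hilbert symbol by squares), as a product of a "small" rational constant and an appropriate monomial/polynomial in the chosen generators, whose value can be further simplified using the relations $\mathcal{R}$.

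The concrete steps, carried out once for each of the five subgroups, are: (i) substitute the expression for $j$ in terms of the generators of $R^{22}_W$ into $\left(-22j, -11(16j+11)\right)_K$; (ii) clear all denominators and pull out square factors, since $(a,b)_K = (ac^2, bd^2)_K$; (iii) use the relations among the generators — for instance $r_{14}, r_{17}, r_{19}$ and their $W$-invariant analogues — to replace sums like $f_{6,1}^2 + f_{6,4}^2$ or $f_{6,2}^2 + 11 f_{6,5}^2$ by single squares or simpler monomials, exactly as $3h_6^4 + h_4^6 = -h_{12}^2$ was used in the $D=6$ case and $27h_6^2 + k_6^2 = -4g_4^3$ in the $D=10$ case; (iv) read off the resulting Hilbert symbol in terms of the generators $x, y, z, \dots$ of $R^{22}_W$, matching the entries of \cref{tab:Mestre_obstructions_22}. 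Bilinearity and the relation $(a,-a)_K = 1$ of the Hilbert symbol handle the bookkeeping throughout.

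The main obstacle I expect is step (iii) for the intermediate quotients, and in particular determining enough of the relation ideal $\mathcal{R}_W$ to perform the simplification. The excerpt explicitly notes that writing down a full set of generators for each $\mathcal{R}_W$ "appears to be computationally difficult and will not be necessary" — so the task is to extract, for each $W$, exactly the one or two relations needed to rewrite $j$ and $16j+11$ as a small constant times (square)$\times$(monomial). This requires care: one has to track which monomials in the $f_{4,i}, f_{6,j}$ are $W$-invariant (using the Atkin--Lehner sign table for the generators given above), express $j$ and $16j+11$ in those, and then recognize the needed quadratic relations among them as consequences of $r_1,\dots,r_{20}$. Once the correct combinations are found the Hilbert symbol manipulations are routine and identical in spirit to the earlier two cases; the verification that the final symbols do not depend on the choice of Hauptmodul normalization (only on the $\overline{K}$-isomorphism class, away from the excluded $j$) follows just as before.

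Finally, as in \cref{prop:Mestre_obstructions_6}, the finitely many excluded values of $j$ are precisely those from \cref{tab:CM_values_genus_2} together with $j \in \{0, \infty\}$ (where $\sJ$ is not defined or the curve degenerates), and possibly the zeros of $J_{10}$; since \cref{prop:conic_bundle_count} and the product lemma are insensitive to finitely many fibers, this exclusion is harmless for the counting applications in \cref{main_thm_22} and \cref{main_thm_22_K}.
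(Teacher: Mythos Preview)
Your plan is correct and would work, but the paper takes a shortcut that sidesteps exactly the obstacle you flag in step (iii). Rather than performing five separate substitutions, the paper does the Hilbert-symbol computation \emph{once} on the top curve, taking care to use only relations $r_{14}$, $r_{17}$, $r_{19}$ that are fixed by the full Atkin--Lehner group; the result is $\left(-22,\, f_{6,1}(f_{6,2} - 11f_{6,1})\right)$. Because every relation and intermediate expression used is $\AL(\cX^{22})$-invariant, the chain of equalities descends to every quotient automatically. The only two steps that may fail on a given quotient are (a) removing $f_{6,5}^2$ as a square from the first slot, which is valid only when $f_{6,5}$ itself lies in $R^{22}_W$ (i.e.\ $W \in \{\{\id\}, \langle w_{22}\rangle\}$), and (b) factoring $f_{6,1}f_{6,2} - 11f_{6,1}^2$ in the second slot, valid only when $f_{6,1}$ lies in $R^{22}_W$ (i.e.\ $W \in \{\{\id\}, \langle w_{2}\rangle\}$). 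Tracking which of (a) and (b) is permitted on each quotient immediately yields all five rows of \cref{tab:Mestre_obstructions_22}. Your five-fold computation would reach the same table but requires expressing $j$ separately in the generators of each $R^{22}_W$ and extracting enough of $\mathcal{R}_W$ to simplify; the paper's observation is that the expression (\ref{Equation: j_to_forms}) is already in the $\AL$-invariant forms $f_{6,1}^2, f_{6,4}^2$, so no quotient-specific rewriting of $j$ or of the relations is ever needed.
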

\begin{proof}
    On the top curve, we compute
    \begin{align*}
        \left( -22j, -11(16j+11)\right) &= 
         \left( 2(f_{6,1}^2+f_{6,4}^2) , (f_{6,1}^2+f_{6,4}^2)(2f_{6,1}^2+f_{6,4}^2) \right) \\
        &= \left( 2\left(11f_{6,3}^2+f_{6,2}^2\right), \left(11f_{6,3}^2+f_{6,2}^2\right)\left(f_{6,1}^2 + \frac{1}{121}(11f_{6,3}^2+f_{6,2}^2\right) \right) \\
        &= \left( -22, f_{6,5}^2-11f_{6,1}^2\right) \\
        &= \left( -22 , f_{6,1}(f_{6,2} - 11 f_{6,1})\right).
    \end{align*} 

    The relations $r_{17}, r_{19}$ and $r_{14}$ used above are all fixed by the full Atkin--Lehner group. The only algebraic steps in our simplification that is not necessarily valid on each quotient is the identification of $f_{6,5}^2$ as a square in the canonical ring and factoring $f_{6,1}f_{6,2} - 11f_{6,1}^2$. Note that $f_{6,5}^2$ is a square only for $W \in \{\{\id\}, \langle w_{22} \rangle \}$. These observations give us the claimed symbols on each quotient in \cref{tab:Mestre_obstructions_22}.
\end{proof}

\begin{table}[h]
\centering
\renewcommand{\arraystretch}{1.2}
\begin{tabular}{cc}
\toprule
$W$ & Mestre obstruction\\
\midrule
$\{\id\}$ & $\left( -22 , f_{6,1}(f_{6,2} - 11 f_{6,1})\right)$ \\
$\langle w_2 \rangle$ & $\left( -22f_{6,5}^2 , f_{6,1}(f_{6,2} - 11 f_{6,1})\right)$ \\
$\langle w_{11} \rangle$ & $\left( -22 f_{6,5}^2, f_{6,1}f_{6,2} - 11 f_{6,1}^2\right)$ \\
$\langle w_{22}\rangle$ & $\left( -22 , f_{6,1}f_{6,2} - 11 f_{6,1}^2\right)$\\
$\AL(\cX^{22})$ & $\left( -22f_{6,5}^2 , f_{6,1}f_{6,2} - 11 f_{6,1}^2\right)$\\
\bottomrule 
\end{tabular}
\caption{Mestre obstructions on $C_W^{22}$.}
\label{tab:Mestre_obstructions_22}
\end{table}

In this case, we see that the Mestre obstruction on the top curve $\cX^{22}$ over a fixed number field $K$ is not constant, as it was for $D\in\{6,10\}$. The non-split fibers in this case do not all correspond to rational points on the coarse space $X^{22}$, as the curve has \emph{no} $\Q$-rational points. For this reason our counting result \cref{prop:conic_bundle_count} does not give the correct count over all number fields. The non-split fibers occurring when $D\in\{6,10\}$ correspond to rational points with CM by certain imaginary quadratic orders of class number one on quotients $X^D/W$ by \emph{non-trivial} subgroups $W \leq \AL(\cX^D)$. 

The residue field of the single quadratic point on $\cX^{22}$ with CM by the order $\Z\left[\frac{1+\sqrt{-11}}{2}\right]$ is $\Q(\sqrt{-11})$ (which is its own Hilbert class field, as this is a class number $1$ order).

\begin{theorem}\label{main_thm_22}
We have that
\[
\overline{\mathcal{N}}^{22}(W,K,B)\ll
\begin{cases}
   B^{2/5}/\log(B)^{3/2} &\text{ if } W\in\{\langle w_2\rangle,  \AL(\cX^{22})\} \\
   &\text{ or if } W=\langle w_{11}\rangle \text{ and } \Q(\sqrt{-11}) \subseteq K,\\
    B^{2/5}/\log(B) &\text{ if } W = \langle w_{22}\rangle \\
    &\text{ or if } W=\{\id\} \text{ and } \Q(\sqrt{-11}) \subseteq K.\\
    \end{cases}
\]
\end{theorem}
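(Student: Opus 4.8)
The plan is to mirror the proofs of \cref{main_thm_6} and \cref{main_thm_10}, applying \cref{prop:conic_bundle_count} to the morphism $\sJ\colon \mathbb{P}^1 - \{0,\infty\} \to \cP(1,2,3,5)$ given by $[J_2:J_4:J_6:J_{10}]$ together with the conic bundle $\pi\colon \mathcal{B} \to \mathbb{P}^1_j$ attached to the relevant Mestre obstruction from \cref{tab:Mestre_obstructions_22}. First I would record that $\delta = 5$: indeed $J_{10} = -(j-1)^4 j^6 (j+1)^{12}$ has degree $10 = 5\cdot 2$, while $J_2$, $J_4$, $J_6$ have degrees $5$, $10$, $15$, so each $J_i$ is homogeneous of degree $5 w_i$ in the weights $(1,2,3,5)$. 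Hence the main term in \cref{prop:conic_bundle_count} is $B^{2/5}/\log(B)^{\Delta(\pi)}$, which already explains the exponent $2/5$ in every case of the theorem. The remaining work is the computation of $\Delta(\pi)$ for the four relevant subgroups, together with the verification that \cref{prop:conic_bundle_count} applies at all — namely that $\pi$ is a nonsingular conic bundle all of whose non-split fibers lie over rational points, and that some smooth fiber has a rational point.

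Next I would compute $\Delta(\pi) = \sum_{D \in (\mathbb{P}^1)^{(1)}} (1-\delta_D(\pi))$ case by case, reading off the reducible fibers from the Hilbert symbols in \cref{tab:Mestre_obstructions_22} after substituting \eqref{Equation: j_to_forms} to express everything in terms of $j$. For $W = \langle w_{22}\rangle$ the obstruction is $(-22, f_{6,1}f_{6,2} - 11 f_{6,1}^2)$; rewriting the second slot in $j$ I expect a single reducible fiber with two Galois-conjugate components (so $\delta = 1/2$) plus possibly a fiber with a multiplicity-two component, which would contribute either $1$ or $3/2$ to $\Delta$ — and the stated bound $B^{2/5}/\log(B)$ forces $\Delta(\pi) = 1$ here, i.e.\ exactly one non-split fiber with transitive Galois action on two components and no bad-multiplicity contribution, or one double-component fiber; I would pin down which by an explicit discriminant computation of the conic in the $j$-variable. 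For $W \in \{\langle w_2\rangle, \AL(\cX^{22})\}$ the first slot acquires the square factor $f_{6,5}^2$, which (being a square) does not change the conic up to isomorphism over the function field, so the reducible fibers are the same as in the top-curve symbol $(-22, f_{6,1}(f_{6,2}-11f_{6,1}))$ up to the square; the stated exponent $\log(B)^{3/2}$ tells me $\Delta(\pi) = 3/2$, which I would realize as one fiber with two conjugate components ($\delta=1/2$) together with one double-component fiber ($\delta = 0$). The case $W = \langle w_{11}\rangle$ with $\mathbb{Q}(\sqrt{-11}) \subseteq K$ is grouped with the $\langle w_2\rangle$ case, so $\Delta(\pi) = 3/2$ there too; the hypothesis $\mathbb{Q}(\sqrt{-11}) \subseteq K$ is exactly what is needed to ensure that the non-split fiber which over a general $K$ would sit above the quadratic $-11$-CM point of $X^{22}$ (rather than a rational point) instead sits above a $K$-rational point, so that the hypothesis of \cref{prop:conic_bundle_count} that all non-split fibers lie over rational points is met. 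The $W = \{\id\}$ case with $\mathbb{Q}(\sqrt{-11}) \subseteq K$ is grouped with $\langle w_{22}\rangle$, giving $\Delta(\pi) = 1$.

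The main obstacle, and the place I would spend the most care, is precisely this base-field subtlety in the $W \in \{\{\id\}, \langle w_{11}\rangle\}$ cases. As noted in the paragraph preceding the theorem, the Mestre obstruction on the top curve $\cX^{22}$ is \emph{not} constant — unlike $D \in \{6,10\}$ — and the coarse space $X^{22}$ has no $\mathbb{Q}$-rational points, so the problematic non-split fiber of $\pi$ lives above the quadratic point with CM by $\mathbb{Z}[\tfrac{1+\sqrt{-11}}{2}]$, whose residue field is $\mathbb{Q}(\sqrt{-11})$ by the discussion above. If $\mathbb{Q}(\sqrt{-11}) \not\subseteq K$ this fiber does not lie over a $K$-rational point and \cref{prop:conic_bundle_count} simply does not apply, which is why the theorem is silent in that case (as flagged in the second remark after \cref{main_thm}). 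Under the hypothesis $\mathbb{Q}(\sqrt{-11}) \subseteq K$ this point becomes $K$-rational, the remaining non-split fibers are checked to lie over $K$-rational points (the $-4$- and $-3$-CM points are already rational on the relevant quotients, as in the $D=6,10$ analysis), and I would also confirm the existence of a smooth fiber with a $K$-point — for instance by exhibiting a single $j_0 \in \mathbb{P}^1(\mathbb{Q})$ at which the conic is smooth and isotropic, which exists since the generic fiber is a conic over $K(j)$ that becomes isotropic after a finite base change and one can specialize. Once these hypotheses are verified, \cref{prop:conic_bundle_count} yields $\overline{\mathcal{N}}^{22}(W,K,B) \ll B^{2/\delta}/\log(B)^{\Delta(\pi)} = B^{2/5}/\log(B)^{\Delta(\pi)}$ with $\Delta(\pi)$ as computed, completing the proof; as in the $D=6,10$ cases, the finitely many $j$ excluded in \cref{prop:Mestre_obstructions_22} (the points with extra automorphisms in \cref{tab:CM_values_genus_2}) do not affect the asymptotics.
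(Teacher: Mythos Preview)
Your overall strategy---apply \cref{prop:conic_bundle_count} with $\varphi=\sJ$, read off $\delta=5$, then compute $\Delta(\pi)$ from the Hilbert symbols in \cref{tab:Mestre_obstructions_22}---is exactly the paper's approach. Your treatment of the $\Q(\sqrt{-11})$ condition is also essentially right: the $-11$-CM point remains quadratic on $\cX^{22}$ and on $\cX^{22}/\langle w_{11}\rangle$ (since $w_{11}$ fixes it), while $w_2$ and $w_{22}$ act as complex conjugation on it, making its image rational on the other quotients; this is why those two cases are singled out.

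There is, however, a genuine gap in your computation of $\Delta(\pi)$. You assert that for $W\in\{\langle w_2\rangle,\langle w_{11}\rangle,\AL(\cX^{22})\}$ the factor $f_{6,5}^2$ in the first slot ``(being a square) does not change the conic up to isomorphism over the function field''---but this is false on exactly those quotients. The form $f_{6,5}$ is negated by each of $w_2$, $w_{11}$, and hence is \emph{not} in the canonical ring of $\cX^{22}/W$; only $f_{6,5}^2$ descends, and it is not a square there. This is precisely the mechanism that distinguishes the cases: since $f_{6,5}^2=f_{6,1}f_{6,2}$ (relation $r_{14}$), its zero locus meets the zero $f_{6,1}=0$ of the second slot, and there both entries of the symbol vanish, producing a single component of multiplicity two ($\delta_x=0$, contributing $1$). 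The other fiber, at $f_{6,2}=11f_{6,1}$, has two components swapped by $\Gal$ ($\delta_x=1/2$). This gives $\Delta(\pi)=3/2$. For $W\in\{\{\id\},\langle w_{22}\rangle\}$ the first slot is the constant $-22$, so both degenerate fibers are pairs of lines conjugate over $\Q(\sqrt{-22})$, and $\Delta(\pi)=1$. Your proposal reads these values from the theorem statement and then offers a justification that, taken at face value, would force $\Delta(\pi)=1$ in every case---an internal inconsistency.

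Two smaller points: your degree count for $J_{10}$ is off (it is $4+6+12=22$, not $10$; the value $\delta=5$ is forced by $J_2,J_4,J_6$ having degrees $5,10,15$), and the two non-split fibers correspond to $j=0$ (the $-11$-CM point) and the locus $f_{6,2}=11f_{6,1}$, not to $-4$- or $-3$-CM points.
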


\begin{proof}
Following the arguments of \cref{main_thm_6} and \cref{main_thm_10} we apply \cref{prop:conic_bundle_count} with $\varphi=\sJ$. From the definition of $\sJ$ we see that $\delta=5$.  From \cref{prop:Mestre_obstructions_22}, we see that the Mestre conic bundle $\pi:\mathcal{B}\to \mathbb{P}^1_j$ has exactly two reducible fibers for all $W$. These two fibers correspond to $f_{6,2}=11f_{6,1}$ and $f_{6,1}=0$. Using the relations of the modular forms $\cR$, if $f_{6,2}=11f_{6,1}$ then $j=11/16$. Using the relations also shows that if $f_{6,1}=0$ then $f_{6,4}=0$. In order to find the corresponding value of $j$ when $f_{6,1}=0$ we compute $q$-expansions of the modular forms $f_{6,1}$ and $f_{6,2}$ at the point with CM by $\Z\left[\frac{1+\sqrt{-11}}{2}\right]$, which can be done using results in Nualart Riera's thesis \cite[Proposition 6.17 and Table 6.15]{NR15}. Carrying out this computation shows that if $f_{6,1}=0$ then $j=0$.

An assumption in \cref{prop:conic_bundle_count} is that all the non-split fibers lie above rational points. Although $j=0$ is a rational point on $\bbP^1_j$, we ultimately care about counting the corresponding abelian surfaces. The point $j=0$ corresponds to the single quadratic point $z_{11}$ with CM by $\Z\left[\frac{1+\sqrt{-11}}{2}\right]$ on $\cX^{22}$. This CM point has residue field $\Q(\sqrt{-11})$ (moreover $K$ is a field of definition for a QM abelian surface inducing $z_{11}$ if and only if $\Q(\sqrt{-11})\subseteq K$ by Jordan's result \cite[Theorem 2.1.3]{Jor}). Each nontrivial Atkin--Lehner involution aside from $w_{11}$ acts on $z_{11}$ as complex conjugation, while $w_{11}$ fixes $z_{11}$. We then have a single rational $-11$-CM point on the quotient by $W$ for $W \in \{\langle w_2 \rangle, \langle w_{22} \rangle, \AL(\cX^{22})\}$ which is elliptic of order $2$ in the case of the full quotient, and a single quadratic $-11$-CM point which is elliptic of order $2$ on the quotient by $\langle w_{11} \rangle$. 

We now compute $\Delta(\pi)$. If $W=\{\id\}$ and $\Q(\sqrt{-11}) \subseteq K$, or if $W=\langle w_{22}\rangle$, then the two reducible fibers each have two reducible components on which  $\Gal(\overline{K}/K)$ acts transitively. Therefore $\Delta(\pi)=1/2+1/2=1$.

If $W\in\{\langle w_2\rangle, \langle w_{11}\rangle, \AL(\cX^{22})\}$ then one of the reducible fibers has a single component of multiplicity 2, and the other reducible fiber has two components on which  $\Gal(\overline{K}/K)$ acts transitively. Therefore $\Delta(\pi)=1+1/2=3/2$.
\end{proof}

We now count $K$-isomorphism classes.

\begin{theorem}\label{main_thm_22_K}
Assume that
\begin{itemize}
\item $W\in \{\langle w_2\rangle, \langle w_{22}\rangle, \AL(\cX^{22})\}$ or
\item $W\in \left\{ \{\id\}, \langle w_{11}\rangle\right\}$ and $\Q(\sqrt{-11})\subseteq K$.
\end{itemize}
Then
\[
\mathcal{N}^{22}(W,K,B)\asymp B.
\]
\end{theorem}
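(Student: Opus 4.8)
The plan is to follow the proofs of \cref{main_thm_6_K} and \cref{main_thm_10_K}: stratify $\mathcal{N}^{22}(W,K,B)$ by the geometric automorphism group of the genus two curve and combine \cref{proposition:Wright}, \cref{proposition:D4}, and \cref{main_thm_22}. The new feature is that \cref{main_thm_22} gives only an upper bound on $\overline{\mathcal{N}}^{22}(W,K,B)$, so \cref{lem:product} cannot be used to produce the lower bound; that will instead come from the quadratic twists of a single curve. Throughout I would use \cref{proposition:22_extra_auts}: outside the finitely many points of \cref{tab:CM_values_genus_2}, every genus two curve parameterized by $C^{22}_W$ has geometric automorphism group $C_2$, the exceptions having automorphism group $C_2\times C_2$ (finitely many points) or $D_4$ (only $j = 1/3$).

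For the upper bound, by \cref{proposition:Wright} and \cref{proposition:D4} the contributions to $\mathcal{N}^{22}(W,K,B)$ from the finitely many exceptional twist classes are $O(B^{1/2}\log(B)^2)$ and $O(B^{1/2})$, hence $o(B)$. For the remaining curves I would organize the count over underlying $\overline{K}$-isomorphism classes $\overline C$: their $K$-forms are quadratic twists, so by \cref{proposition:Wright} the number of $K$-isomorphism classes $C'$ in the class of $\overline C$ with $\Ht(C')\leq B$ is $\ll 1 + B/\overline{\Ht}(\overline C)$, and is $0$ unless $\overline{\Ht}(\overline C)\leq B$. Summing, this yields a contribution
\[
\ll \overline{\mathcal{N}}^{22}(W,K,B) + B\sum_{\overline C\,:\,\overline{\Ht}(\overline C)\leq B}\frac{1}{\overline{\Ht}(\overline C)}.
\]
By \cref{main_thm_22} the first term is $\ll B^{2/5}/\log(B) = o(B)$, and by the same result the number of $\overline{K}$-isomorphism classes in $C^{22}_W(K)$ with $K$ a field of definition and Igusa height at most $T$ is $\ll T^{2/5}$; since $2/5 < 1$, an Abel summation gives that the displayed sum is $O(1)$. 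Hence $\mathcal{N}^{22}(W,K,B) \ll B$. (This is just the upper-bound half of the mechanism of \cref{lem:product}, which is valid with only an upper bound on one factor.)

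For the lower bound it suffices to produce a single genus two curve $\overline C_0$ over $K$ whose $\overline K$-isomorphism class lies in $C^{22}_W(K)$, has $K$ as a field of definition, and avoids both the locus $J_{10} = 0$ and the finite exceptional locus of \cref{tab:CM_values_genus_2}; then $\Aut_{\overline K}(\overline C_0)\cong C_2$, and all of its quadratic twists over $K$ again lie in $C^{22}_W(K)$ with $K$ a field of definition, since they carry the same Igusa point. By \cref{proposition:Wright} there are $\asymp B$ such twists of height at most $B$, whence $\mathcal{N}^{22}(W,K,B)\gg B$, and combining with the upper bound gives $\mathcal{N}^{22}(W,K,B)\asymp B$. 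To produce $\overline C_0$ when $W\in\{\langle w_2\rangle,\langle w_{22}\rangle,\AL(\cX^{22})\}$, note that $X^{22}/W\cong\mathbb{P}^1_{\Q}$ and that the proof of \cref{main_thm_22} applies \cref{prop:conic_bundle_count}, whose conclusion is sharp over $\Q$; hence $\overline{\mathcal{N}}^{22}(W,\Q,B)\to\infty$, so infinitely many such curves already exist over $\Q\subseteq K$, and one can be chosen avoiding the excluded loci. When $W\in\{\{\id\},\langle w_{11}\rangle\}$ with $\Q(\sqrt{-11})\subseteq K$, the proof of \cref{main_thm_22} verifies the hypothesis of \cref{prop:conic_bundle_count} that the Mestre conic bundle $\pi:\mathcal B\to\mathbb{P}^1_j$ has a smooth $K$-fiber with a $K$-point; the total space $\mathcal B$ is then a rational surface over $K$ with a rational point, so $\mathcal B(K)$ is Zariski dense and meets infinitely many fibers of $\pi$, and again a suitable $j_0\in\mathbb{P}^1(K)$ can be chosen.

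The main obstacle, in my expectation, is precisely this last existence step for $W\in\{\{\id\},\langle w_{11}\rangle\}$: because \cref{main_thm_22} is not known to be sharp for general $K$, the lower bound cannot be read off from it, and one must instead exhibit one good curve and then exploit the abundance of its quadratic twists. The reason the hypothesis $\Q(\sqrt{-11})\subseteq K$ is needed is exactly that $X^{22}$ (respectively $X^{22}/\langle w_{11}\rangle$) fails to have usable rational points otherwise, the obstructing non-split fiber of the Mestre conic bundle sitting over the quadratic CM point $z_{11}$.
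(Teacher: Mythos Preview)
Your argument follows the paper's exactly: stratify by geometric automorphism group using \cref{proposition:22_extra_auts}, dispose of the finitely many $C_2\times C_2$ and $D_4$ classes via \cref{proposition:Wright} and \cref{proposition:D4}, and obtain the main term from the $C_2$ classes by combining \cref{proposition:Wright}, \cref{main_thm_22}, and (the mechanism of) \cref{lem:product}. The paper's proof simply invokes \cref{lem:product} without isolating the lower bound, whereas you correctly observe that \cref{main_thm_22} gives only an upper bound and supply the missing $\gg B$ by exhibiting a single $C_2$-curve and counting its quadratic twists---this is a refinement of the same argument rather than a different route.
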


\begin{proof}
 
    From  \cref{proposition:22_extra_auts} there are finitely many curves corresponding to points of $C^{22}_W$ with geometric automorphism group $C_2\times C_2$ or $D_4$, and outside of these exceptions the automorphism group is $C_2$. From \cref{proposition:Wright} the contribution to $\cN^{22}(W,K,B)$ from curves with $\Aut_{\overline{K}}(C)\cong C_2\times C_2$ is $o(B)$. From \cref{proposition:D4} the contribution from curves with $\Aut_{\overline{K}}(C)\cong D_4$ is also $o(B)$. From \cref{proposition:Wright}, \cref{main_thm_22}, and \cref{lem:product}, the contribution from curves with $\Aut_{\overline{K}}(C)\cong C_2$ gives the estimate in the statement of the theorem.
\end{proof}

\bibliographystyle{amsalpha}
\bibliography{biblio}
\end{document}